\theoremstyle{plain}
\newtheorem{theorem}{Theorem}[section]
\newtheorem{proposition}{Proposition}[section]
\newtheorem{lemma}{Lemma}[section]
\theoremstyle{remark}
\newtheorem{remark}{Remark}[section]
\numberwithin{equation}{section}
\newcounter{saveenumi}
\begin{document}
\title[Locating the peaks of least-energy solutions]{Locating the peaks of
least-energy solutions to a quasilinear elliptic Neumann problem}
\author{Yi Li}
\address{Department of Mathematics\\
The University of Iowa\\
Iowa City, IA 52242}
\address{Department of Mathematics\\
Hunan Normal University\\
Changsha, Hunan}
\email{yi-li@uiowa.edu}
\author{Chunshan Zhao}
\address{Department of Mathematical Sciences\\
Georgia Southern University\\
Statesboro, GA 30460}
\email{czhao@GeorgiaSouthern.edu}
\keywords{Quasilinear Neumann problem, $m$-Laplacian operator, least-energy
solution, exponential decay, mean curvature}

\begin{abstract}
In this paper we study the shape of least-energy solutions to the
quasilinear problem $\varepsilon^{m}\Delta_{m}u-u^{m-1}+f\left( u\right) =0$
with homogeneous Neumann boundary condition. We use an intrinsic variation
method to show that as $\varepsilon\rightarrow0^{+}$, the global maximum
point $P_{\varepsilon}$ of least-energy solutions goes to a point on the
boundary $\partial\Omega$ at the rate of $o(\varepsilon)$ and this point on
the boundary approaches to a point where the mean curvature of $%
\partial\Omega$ achieves its maximum. We also give a complete proof of
exponential decay of least-energy solutions.
\end{abstract}

\maketitle

\section{\label{Int}Introduction and statement of results}

In this paper we study the shape of certain solutions to the following
quasilinear elliptic Neumann problem:%
\begin{equation}
\left\{ \begin{aligned} &\varepsilon^{m}\Delta_{m}u-u^{m-1}+f\left( u\right)
=0, & & u>0\text{ in }\Omega, \\ &\frac{\partial u}{\partial\nu}=0 & &
\text{on }\partial\Omega, \end{aligned}\right.  \label{eqInt.1}
\end{equation}
where $m$ ($2\leq m<N$) and $0<\varepsilon\leq 1$ are constants and $\Omega
\subseteq\mathbb{R}^{N}$ ($N\geq3$) is a smooth bounded domain. The operator 
$\Delta_{m}u=\limfunc{div}(\left\vert \nabla u\right\vert ^{m-2}\nabla u)$
is the $m$-Laplacian operator, and $\nu$ is the unit outer normal to $%
\partial\Omega$.

Problem (\ref{eqInt.1}) appears in the study of non-Newtonian fluids,
chemotaxis and biological pattern formation. For example, in the study of
non-Newtonian fluids, the quantity $m$ is a characteristic of the medium:
media with $m>2$ are called dilatant fluids, and those with $m<2$ are called
pseudo-plastics. If $m=2$, they are Newtonian fluids (see \cite{Dia85} and
its bibliography for more backgrounds). For the case $m=2$, (\ref{eqInt.1})
is also known as the stationary equation of the Keller--Segal system in
chemotaxis \cite{LNT88} or the limiting stationary equation of the so-called
Gierer--Meinhardt system in biological pattern formation (see \cite{Wei97}).

First let us recollect some results related to our problem. In a series of
remarkable papers, C.-S. Lin, W.-M. Ni and I. Takagi \cite{LNT88}, Ni and
Takagi \cite{NiTa91}, \cite{NiTa93} studied the Neumann problem for certain
elliptic equations, including%
\begin{equation}
\left\{ \begin{aligned} &d\Delta u-u+u^{p}=0, & &u>0\text{ in }\Omega,\\
&\frac{\partial u}{\partial\nu}=0 & &\text{on }\partial\Omega, \end{aligned}%
\right.  \label{eqInt.2}
\end{equation}
where $d>0$, $p>1$ are constants, and $p$ is subcritical, i.e., $p<\frac {N+2%
}{N-2}$. First, Lin, Ni and Takagi \cite{LNT88} applied the mountain-pass
lemma \cite{AmRa73} to show the existence of a least-energy solution $u_{d}$
to (\ref{eqInt.2}), by which is meant that $u_{d}$ has the least energy
among all solutions to (\ref{eqInt.2}) with the energy functional%
\begin{equation*}
I_{d}\left( u\right) =\int_{\Omega}\left( \frac{d}{2}\left\vert \nabla
u\right\vert ^{2}+\frac{1}{2}u^{2}-\frac{1}{p+1}u_{+}^{p+1}\right) \,dx
\end{equation*}
defined on $W^{1,2}\left( \Omega\right) $. Hereinafter $u_{+}=\max\left\{
u,0\right\} $ and $u_{-}=\min\left\{ u,0\right\} $. Then in \cite{NiTa91}, 
\cite{NiTa93}, Ni and Takagi investigated the shape of the least-energy
solution $u_{d}$ as $d$ becomes sufficiently small, and showed that $u_{d}$
has exactly one peak (i.e., local maximum of $u_{d}$) at $P_{d}\in
\partial\Omega$. Moreover, as $d$ tends to zero, $P_{d}$ approaches a point
where the mean curvature of $\partial\Omega$ achieves its maximum. See \cite%
{N} for a review in this field. Also see \cite{NPT92} for the critical case $%
p=\frac{N+2}{N-2}$, and \cite{Gui96}, \cite{GuGh98}, \cite{GuWe99}, \cite%
{GuWe00}, \cite{GWW00} for existence and properties of multiple-peaks
solutions to (\ref{eqInt.2}).

From now on we make some hypotheses on $f\colon\mathbb{R}\rightarrow \mathbb{%
R}$, as follows.

\begin{enumerate}
\item \renewcommand{\theenumi}{H$_\arabic{enumi}$}

\item \label{hypothesis1}$f\left( t\right) \equiv0$ for $t\leq0$ and $f\in
C^{1}\left( \mathbb{R}\right) $.

\item \label{hypothesis2}$f(t)=O\left( t^{p}\right) $ as $t\rightarrow
\infty $ with $m-1<p<\displaystyle\frac{N\left( m-1\right) +m}{N-m}$.

\item \label{hypothesis3}Let $\displaystyle F\left( t\right)
=\int_{0}^{t}f\left( s\right) \,ds$. Then there exists a constant $%
\displaystyle
\theta\in\left( 0,\frac1m\right) $ such that $F\left( t\right) \leq\theta
tf\left( t\right) $ for $t>0$.

\item \label{hypothesis4}$\displaystyle\frac{f\left( t\right) }{t^{m-1}}$ is
strictly increasing for $t>0$ and $f\left( t\right) =O\left( t^{m-1+\delta
}\right) $ as $t\rightarrow0^{+}$ with a constant $\delta>0$.

\item \label{hypothesis5}Let $\displaystyle g\left( u\right) =\frac{\left(
m-1\right) u^{m-1}-uf^{\prime}\left( u\right) }{u^{m-1}-f\left( u\right) }$.
Then $g\left( u\right) $ is non-increasing on $\left( u_{c},\infty\right) $,
where $u_{c}$ is the unique positive solution for $f\left( t\right) =t^{m-1}$%
.
\end{enumerate}

\noindent\setcounter{saveenumi}{\value{enumi}} Next we present some
preliminary knowledge about least energy solutions of the following problem: 
\begin{equation}
\left\{ 
\begin{array}{l}
\Delta_{m}u-u^{m-1}+f(u)=0\quad\text{in }\mathbb{R}^{N} \\[2pt] 
u>0\quad\text{in }\mathbb{R}^{N}%
\end{array}
\right.  \label{IntroEq5}
\end{equation}
As before we define an ``energy functional'' $I$:$W^{1,m}(\mathbb{R}%
^N)\longrightarrow\mathbb{R}$ associated with (\ref{IntroEq5}) by 
\begin{equation}
I(\tilde v)=\int_{\mathbb{R}^N}\left\{ \frac{1}{m}\left( \varepsilon
^{m}\left\vert \nabla \tilde v\right\vert ^{m}+\left\vert \tilde
v\right\vert ^{m}\right) -F(\tilde v_{+})\right\} \;dx\text{.}
\label{IntroEq003}
\end{equation}

Next let us give a remark on ground states to the problem \ref{IntroEq5}.
Here by a ground state we mean a non-negative nontrivial $C^1$ distribution
solution which tends to zero at $\infty$. For case $m=2$, it is well known
that the problem \ref{IntroEq5} has a unique ground state (up to
translations) which is radially symmetric \cite{GNN81}. For case $2<m<N$
uniqueness and radial symmetry of ground states are still open. But the
Steiner symmetrization tells us the least-energy solutions must be radially
symmetric (certainly least-energy solutions are ground states). Our
assumptions guarantee that the uniqueness (up to translations) of radial
ground states (see \cite{SeTa00}), which implies the uniqueness of
least-energy solutions of the problem (\ref{IntroEq5}). Exact exponential
decay of radial ground states was given in \cite{LiZh04a}, thus we have the
following proposition about the unique radial least-energy solution to
problem \ref{IntroEq5}:

\begin{proposition}
\label{IntroProp1}Under assumptions \textup{(\ref{hypothesis1})--(\ref%
{hypothesis5})}, there is a unique least energy solution $w(x)$ for \emph{(}%
\ref{IntroEq5}\emph{)} satisfying:\medskip

\noindent\emph{(}\thinspace i\thinspace\emph{)} \setlength{\hangindent}{18pt}%
$w$ is radial, i.e., $w(x)=w(\left\vert x\right\vert )=w(r)$ and $w\in C^{1}(%
\mathbb{R}^{N})$ with \newline
$w(0)=\max_{X\in\mathbb{R}^{N}}w(x)$, $w^{\prime}(0)=0$ and $w^{\prime
}(r)<0,\;\forall r>0$.

\noindent\setlength{\hangindent}{18pt}\emph{(}ii\emph{)} $\lim
_{r\longrightarrow\infty}w(r)r^{\frac{N-1}{m(m-1)}}e^{\binom{1}{m-1}^{\frac{1%
}{m}}r}=C_{0}>0$ for some constant $C_{0}$ and

$\lim _{r\longrightarrow\infty}\frac{w^{\prime}(r)}{w(r)}=-\binom{1}{m-1}^{%
\frac{1}{m}}$. \medskip
\end{proposition}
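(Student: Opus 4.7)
The statement assembles three known ingredients; my plan is to indicate how to stitch them together rather than rework any of them in detail. First, I would establish existence of a least-energy solution by a standard variational setup: working on the Nehari manifold associated with $I$, hypothesis (\ref{hypothesis2}) supplies subcriticality so that $I$ is well-defined and $C^{1}$ on $W^{1,m}(\mathbb{R}^{N})$; (\ref{hypothesis3}) is the Ambrosetti-Rabinowitz condition yielding boundedness of Palais-Smale sequences; and (\ref{hypothesis4}) gives the correct behaviour near $0$ so that the mountain-pass geometry is present. The loss of compactness produced by translation invariance is most cheaply circumvented by minimizing on the subspace of radial functions, where the Strauss-type compact embedding $W^{1,m}_{\mathrm{rad}}(\mathbb{R}^{N})\hookrightarrow L^{p+1}(\mathbb{R}^{N})$ is available; alternatively one could use concentration-compactness on all of $\mathbb{R}^{N}$.

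Second, I would upgrade an arbitrary least-energy solution to a radial one via Schwarz symmetrization: the $L^{m}$-norm and $\int F(u)\,dx$ are invariant under radial rearrangement, while $\int|\nabla u|^{m}\,dx$ does not increase, by the P\'olya-Szeg\H{o} inequality for Sobolev functions. Hence the rearrangement $w^{*}$ has no larger energy than $w$ and lies in the same constraint set, which forces $w$ to coincide, after a translation, with $w^{*}$. This yields part~(i) except for the strict inequality $w'(r)<0$, which follows from V\'azquez's strong maximum principle and Hopf lemma for the $m$-Laplacian applied to the radial ODE satisfied by $w$; interior $C^{1,\alpha}$ regularity is standard for quasilinear equations of this type.

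Third, uniqueness of positive radial ground states is precisely the Serrin-Tang theorem \cite{SeTa00}, whose hypothesis is exactly encoded in the monotonicity of $g$ in (\ref{hypothesis5}); combined with the previous step this yields uniqueness (up to translation) of the least-energy solution itself. Finally, the sharp asymptotics in part~(ii) are the main result of \cite{LiZh04a}, obtained by linearizing the radial equation at infinity around a decaying profile and then analysing the resulting ODE by phase-plane and Lyapunov methods to extract both the algebraic prefactor $r^{-(N-1)/(m(m-1))}$ and the strictly positive limit constant $C_{0}$.

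The only non-routine verification is checking that assumptions (\ref{hypothesis1})-(\ref{hypothesis5}) are in exactly the form required by the Serrin-Tang uniqueness theorem; once that is in place, the degenerate strong maximum principle of V\'azquez is the main technical tool supporting the strict radial monotonicity in~(i), and the rest is standard quasilinear variational calculus supplemented by the cited exponential-decay analysis.
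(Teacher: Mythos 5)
Your proposal is correct and follows essentially the same route as the paper, which itself treats this proposition as an assembly of known results: symmetrization (the paper invokes Steiner symmetrization, you invoke Schwarz/P\'olya--Szeg\H{o}, the same mechanism) to reduce to radial least-energy solutions, the Serrin--Tang theorem \cite{SeTa00} for uniqueness of radial ground states under hypothesis (\ref{hypothesis5}), and the decay asymptotics from \cite{LiZh04a} for part (ii). The paper gives no more detail than you do, so there is nothing further to reconcile.
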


\begin{remark}
\label{RemInt.1}A good example for $f\left( t\right) $ which satisfies all
hypotheses \textup{(\ref{hypothesis1})--(\ref{hypothesis5})} is $f\left(
t\right) =t^{p}$ for $\displaystyle m-1<p<\frac{N\left( m-1\right)+m}{N-m}$.
\end{remark}

Next we define an \textquotedblleft energy functional\textquotedblright\ $%
J_{\varepsilon}\colon W^{1,m}\left( \Omega\right) \rightarrow\mathbb{R}$
associated with (\ref{eqInt.1}) by%
\begin{equation}
J_{\varepsilon}\left( v\right) =\int_{\Omega}\left\{ \frac{1}{m}\left(
\varepsilon^{m}\left\vert \nabla v\right\vert ^{m}+\left\vert v\right\vert
^{m}\right) -F\left( v_+\right) \right\} \,dx,  \label{eqInt.5}
\end{equation}
with $F\left( v_+\right) =\int_{0}^{v_+}f\left( s\right) \,ds$. Then the
well-known mountain-pass lemma \cite{AmRa73} implies that%
\begin{equation}
c_{\varepsilon}=\inf_{h\in\Gamma}\max_{t\in\left[ 0,1\right]
}J_{\varepsilon}\left( h\left( t\right) \right)  \label{eqInt.6}
\end{equation}
is a positive critical value of $J_{\varepsilon}$, where $\Gamma$ is the set
of all continuous paths joining the origin and a fixed nonzero element $e\in
W^{1,m}\left( \Omega\right) $ such that $e\geq0$ and $J_{\varepsilon}\left(
e\right) \leq0$. It turns out $c_{\varepsilon}$ can also be characterized as
follows:%
\begin{equation*}
c_{\varepsilon}=\inf_{u\in M_{\varepsilon}}J_{\varepsilon}\left( u\right)
\end{equation*}
with 
\begin{equation*}
M_{\varepsilon}=\left\{ \,u\in W^{1,m}\left( \Omega\right) \mathrel{;}u\geq
0,\;u\not \equiv 0,\;\int_{\Omega}\left( \varepsilon^{m}\left\vert \nabla
u\right\vert ^{m}+u^{m}\right) \,dx=\int_{\Omega}f\left( u\right)
u\,dx\,\right\}
\end{equation*}
or%
\begin{equation}
c_{\varepsilon}=\inf\left\{ \,M\left[ u\right] \mid u\in W^{1,m}\left(
\Omega\right) ,\;u\not \equiv 0\text{ and }u\geq0\text{ in }\Omega\,\right\}
\label{eqInt.7}
\end{equation}
with%
\begin{equation*}
M\left[ u\right] =\sup_{t\geq0}J_{\varepsilon}\left( tu\right) .
\end{equation*}
Hence $c_{\varepsilon}$ is the least positive critical value and a critical
point $u_{\varepsilon}$ of $J_{\varepsilon}$ with critical value $%
c_{\varepsilon}$ is called a least-energy solution. Notice also that if we
let%
\begin{equation*}
c_{\ast}=\text{I}(w)=\frac{1}{m}\int_{\mathbb{R}^{N}}\left( \left\vert
\nabla w\right\vert ^{m}+w^{m}\right) \,dx-\int_{\mathbb{R}^{N}}F\left(
w\right) \,dx,
\end{equation*}
where $w$ is the unique least energy solution of (\ref{IntroEq5}), then $%
c_{\ast}$ can also be characterized as 
\begin{equation}
c_{\ast}=\inf\left\{ \,M_{\ast}\left[ v\right] \mid v\in W^{1,m}\left( 
\mathbb{R}^{N}\right) ,\;v\not \equiv 0\text{ and }v\geq0\text{ in }\mathbb{R%
}^{N}\,\right\}  \label{eqInt.8}
\end{equation}
with%
\begin{equation*}
M_{\ast}\left[ v\right] =\sup_{t\geq0}I\left( tv\right) .
\end{equation*}
We refer to Lemma 2.1 of \cite{LiZh04c} for the above characterizations.

\ 

Next we consider the following problem:

$v\in W^{1,m}\left( \mathbb{R}^{N}_{+}\right) $ with $\mathbb{R}%
^{N}_{+}=\left\{ \left( x_{1},\cdots, x_{N}\right) \in\mathbb{R}^{N},
x_{N}\geq0\right\} $ and satisfies 
\begin{equation}
\left\{ \begin{aligned} &\Delta_{m}v-v^{m-1}+f\left(v\right) =0, & &
v>0\text{ in }\mathbb{R}^N_+, \\ &\frac{\partial v}{\partial x_N}=0 & &
\text{on }x_N=0. \end{aligned}\right.  \label{eqInt.9}
\end{equation}
The solutions of (\ref{eqInt.9}) can be characterized as critical points of
the functional defined over $W^{1,m}\left( \mathbb{R}_{+}^{N}\right) $ as
follows. 
\begin{equation*}
I_{\mathbb{R}^{N}_{+}}(\tilde v)=\frac{1}{m}\int_{\mathbb{R}^{N}_{+}}\left|
\nabla\tilde v|^{m}+{\tilde v}^{m}\right) \,dx-\int_{\mathbb{R}%
^{N}_{+}}F(\tilde v_+) \,dx.
\end{equation*}
Similarly as above the least positive critical value $C_{*}$ corresponding
to least energy solutions of (\ref{eqInt.9}) can be characterized as 
\begin{equation}
C_{*}=\inf_{\tilde v\in W^{1,m}\left( \mathbb{R}^{N}_{+}\right) ,\tilde
v\geq0, \tilde v\not \equiv 0}\sup_{t>0}I_{\mathbb{R}^{N}_{+}}(t\tilde v)
\label{eqInt.10}
\end{equation}
and moreover 
\begin{equation}
C_{*}=\frac12 c_{*}  \label{eqInt.11}
\end{equation}
due to the boundary condition in (\ref{eqInt.9}) and the fact that $w$ is
radial and hence $\frac{\partial w}{\partial x_{N}}=0.$ We also refer to
Lemma 2.1 of \cite{LiZh04c} for the above characterization of $C_{*}$. In
Theorem 1.3 of \cite{LiZh04c}, we proved the following theorem.

\begin{theorem}
\label{ThmA}Under hypotheses \textup{(\ref{hypothesis1})--(\ref{hypothesis5})%
}, let $u_{\varepsilon}$ be a least-energy solution of \textup{(\ref{eqInt.1}%
)}. Then all local maximum points(if more than one) of $u_{\varepsilon}$
aggregate to a global maximum point $P_{\varepsilon}$ at a rate of $%
o(\varepsilon)$ and $dist(P_{\varepsilon},\partial\Omega)/\varepsilon$$%
\rightarrow 0$ as $\varepsilon\rightarrow 0^+$, where $dist(\cdot,\cdot)$ is
the general distance function. Moreover, we have the following upper-bound
estimate for $c_{\varepsilon}$ as $\varepsilon\rightarrow0^{+}$\textup{:}%
\begin{equation}
c_{\varepsilon}\leq\varepsilon^{N}\left\{ \frac{1}{2}c_{\ast}-\left(
N-1\right) \max_{P\in\partial\Omega}H\left( P\right) \gamma\varepsilon
+o\left( \varepsilon\right) \right\} ,  \label{eqInt.12}
\end{equation}
where $H\left( P\right) $ denotes the mean curvature of $\partial\Omega$ at $%
P$, $\gamma>0$ is a positive constant given by 
\begin{equation}
\gamma=\frac{1}{N+1}\int_{\mathbb{R}_{+}^{N}}\left\vert w^{\prime}\left(
\left\vert z\right\vert \right) \right\vert ^{m}z_{N}\,dz.  \label{eqInt.13}
\end{equation}
\end{theorem}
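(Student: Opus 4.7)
The estimate (\ref{eqInt.12}) is the main computational input of the theorem; granted this upper bound, the concentration/boundary-location statements in the first part of the theorem follow by a standard blow-up analysis applied to least-energy solutions (uniform $C^{1,\alpha}$ estimates, rescaling $\tilde u_\varepsilon(y)=u_\varepsilon(P_\varepsilon+\varepsilon y)$ at a global maximum, convergence to a ground state of a limit problem, and the fact that the upper bound $c_\varepsilon\le\varepsilon^N c_*/2+\cdots$ together with the matching lower bound forces the limit profile to live on a half-space rather than on $\mathbb{R}^N$). I focus here on the plan for (\ref{eqInt.12}). The strategy is to apply the Nehari-type characterization (\ref{eqInt.7}) to an explicit test function: $c_\varepsilon\le M[v_\varepsilon]:=\sup_{t\ge 0}J_\varepsilon(tv_\varepsilon)$. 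Fix $P_0\in\partial\Omega$ with $H(P_0)=\max_{\partial\Omega}H$, introduce Fermi coordinates $y=(y',y_N)$ near $P_0$ via a diffeomorphism $\Psi$ straightening $\partial\Omega$ to $\{y_N=0\}$, and set
\[
v_\varepsilon(x)=\eta(x)\,w\bigl(\Psi^{-1}(x)/\varepsilon\bigr),
\]
with $\eta$ a smooth cutoff supported in the chart and $w$ the radial ground state of Proposition \ref{IntroProp1}. The exponential decay in part (ii) of that proposition makes the cutoff contribute only exponentially small errors; and radiality of $w$ gives $\partial v_\varepsilon/\partial y_N=0$ on $\{y_N=0\}$ automatically.

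Using the standard Fermi-coordinate expansions
\[
dx=\bigl(1-(N-1)H(y')y_N+O(y_N^2)\bigr)dy,\qquad g^{ij}(y)=\delta^{ij}+2y_N h^{ij}(y')+O(y_N^2)
\]
(the latter on the tangent block, with $g^{NN}=1$ and $g^{\alpha N}=0$), change variables $y=\Psi^{-1}(x)$ and $z=y/\varepsilon$, and Taylor-expand $|\nabla v_\varepsilon|^m=(g^{ij}\partial_iv_\varepsilon\partial_jv_\varepsilon)^{m/2}$ in $y_N=\varepsilon z_N$. Each of the three terms of $J_\varepsilon(v_\varepsilon)$ then splits into a leading $\varepsilon^N$ integral over $\mathbb{R}_+^N$ and an $\varepsilon^{N+1}$ curvature correction; by radial symmetry of $w$ and the $\mathbb{R}^N$-Pohozaev identity $c_*=\frac{1}{N}\int_{\mathbb{R}^N}|\nabla w|^m\,dz$, the leading pieces sum to exactly $\frac{1}{2}c_*$. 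The Nehari-maximizer $\bar t_\varepsilon$ in $\sup_t J_\varepsilon(tv_\varepsilon)$ satisfies $\bar t_\varepsilon=1+O(\varepsilon)$ since $w$ solves (\ref{IntroEq5}); because the first $t$-variation vanishes at $\bar t_\varepsilon$, the correction $J_\varepsilon(\bar t_\varepsilon v_\varepsilon)-J_\varepsilon(v_\varepsilon)=O((\bar t_\varepsilon-1)^2)=O(\varepsilon^2)$ is absorbed in the $o(\varepsilon^{N+1})$ remainder.

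It remains to identify the $\varepsilon^{N+1}$ coefficient as $-(N-1)H(P_0)\gamma$. After diagonalizing the second fundamental form at $P_0$, the coefficient assembles from the weighted integrals $\mathcal{I}:=\int_{\mathbb{R}_+^N}z_N|w'|^m\,dz$, $B:=\int_{\mathbb{R}_+^N}z_Nw^m\,dz$, $C:=\int_{\mathbb{R}_+^N}z_NF(w)\,dz$, together with an angular moment $\int z_N\omega_N^2|w'|^m\,dz$ coming from the metric perturbation in the $m$-Laplacian. Radial symmetry of $w$ reduces the angular moment to $\frac{2}{N+1}\mathcal{I}$, the factor $2/(N+1)$ arising from $\int_{S_+^{N-1}}\omega_N^3\,d\omega/\int_{S_+^{N-1}}\omega_N\,d\omega$; this is the source of the $1/(N+1)$ in the definition (\ref{eqInt.13}) of $\gamma$. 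Collecting all contributions, the coefficient simplifies to $(N-1)H(P_0)\bigl[\mathcal{I}/(N+1)-(\mathcal{I}+B-mC)/m\bigr]$.

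The main obstacle is then to show $\mathcal{I}+B-mC=\frac{2m}{N+1}\mathcal{I}$, which collapses the bracket above to $-\mathcal{I}/(N+1)=-\gamma$. This is a $z_N$-weighted Pohozaev-type identity for the radial ground state: multiplying the radial form $(r^{N-1}|w'|^{m-2}w')'=r^{N-1}(w^{m-1}-f(w))$ of (\ref{IntroEq5}) by $r^2 w'$ and integrating by parts on $(0,\infty)$ yields
\[
\int_0^\infty r^N\bigl(w^m-mF(w)\bigr)dr=\frac{2m-N-1}{N+1}\int_0^\infty r^N|w'|^m\,dr,
\]
which, multiplied by $\int_{S_+^{N-1}}\omega_N\,d\omega$, is precisely the identity required. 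For $m=2$ this recovers Ni--Takagi's classical Neumann-problem calculation; for general $2\le m<N$ the chain-rule cross terms produced by Taylor-expanding $(\cdot)^{m/2}$ in the $m$-Laplacian energy make the bookkeeping strictly more delicate, and their clean cancellation into the single coefficient $\gamma$ relies crucially on this weighted Pohozaev identity.
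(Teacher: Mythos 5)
First, a point of reference: this paper does not prove Theorem \ref{ThmA} at all --- it is quoted verbatim from Theorem 1.3 of \cite{LiZh04c} --- so there is no internal proof to compare against. Judged on its own terms, your plan for the upper bound (\ref{eqInt.12}) is the standard and correct one: test the Nehari characterization (\ref{eqInt.7}) with a cut-off, rescaled ground state glued in at a point of maximal mean curvature, expand in Fermi coordinates, and identify the $\varepsilon^{N+1}$ coefficient. Your bookkeeping is consistent with what the paper does import from \cite{LiZh04c}: the identity you isolate as ``the main obstacle,'' namely $\mathcal{I}+B-mC=\frac{2m}{N+1}\mathcal{I}$, is exactly the content of (\ref{eqSom.19}) in Lemma \ref{LemSom.3} (which rewrites $\gamma=\frac{1}{N+1}\int_{\mathbb{R}^N_+}|w'|^m z_N\,dz$ as $\frac12\int_{\mathbb{R}^N_+}E(w)z_N\,dz$), and your derivation of it from the radial equation multiplied by $r^2w'$ checks out. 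The handling of the Nehari maximizer ($\bar t_\varepsilon=1+O(\varepsilon)$, quadratic correction absorbed into $o(\varepsilon^{N+1})$) is also sound, granted hypothesis (H$_4$) to control $h''$ near $t=1$ as in Lemma \ref{LemSom.5}.

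The genuine gap is in the first half of the theorem, which you dispose of as ``standard blow-up analysis.'' The delicate claims there are the \emph{rates}: $\mathrm{dist}(P_\varepsilon,\partial\Omega)=o(\varepsilon)$, not merely $O(\varepsilon)$, and the aggregation of all local maxima to $P_\varepsilon$ within $o(\varepsilon)$. The upper bound $c_\varepsilon\le\varepsilon^N(\tfrac12 c_*+o(1))$ together with blow-up only rules out $\mathrm{dist}(P_\varepsilon,\partial\Omega)/\varepsilon\to\infty$ (whole-space limit with energy $c_*$); to exclude $\mathrm{dist}(P_\varepsilon,\partial\Omega)/\varepsilon\to d_0\in(0,\infty)$ one must show that a half-space solution at level $C_*=\tfrac12 c_*$ with an interior maximum cannot exist, which rests on the uniqueness and radial symmetry of the ground state (\cite{SeTa00}, Proposition \ref{IntroProp1}) and on the identification of the half-space minimizer as $w$ restricted to $\mathbb{R}^N_+$ and centered on $\partial\mathbb{R}^N_+$; the multi-peak exclusion likewise needs an energy-splitting argument showing two separated bumps would cost at least $2C_*=c_*$. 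None of this follows from the test-function computation, and it is precisely the part of Theorem \ref{ThmA} that the subsequent proof of Theorem \ref{ThmB} leans on (it is what lets one place $\tilde P_\varepsilon$ at the origin with $P_\varepsilon/\varepsilon\to0$). A complete proof must supply these arguments or, as the paper does, cite them from \cite{LiZh04c}.
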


Our goal in this paper is to locate the position on $\partial\Omega$ where
the global maximum point $P_{\varepsilon}$ of $u_{\varepsilon}$ in $%
\overline{\Omega}$ approaches, provided $\varepsilon$ is sufficiently small.
For the case $m=2$, Ni and Takagi \cite{NiTa93} located the peak by
linearizing the equation $d\Delta u-u+f\left( u\right) =0$ around the ground
state $w$. But this method fails for our problem with $m\neq2$ due to the
strong nonlinearity of the $m$-Laplacian operator $\Delta_{m}u=\limfunc{div}%
(\left\vert \nabla u\right\vert ^{m-2}\nabla u)$. So we have to use the
intrinsic variational method created by Del Pino and Felmer in \cite{DeFe99}
to attack it. We also give a complete proof of the exponential decay of the
least-energy solution $u_{\varepsilon}.$ We remark that our proof is
complete and does not require the non-degeneracy of the unique radial least
energy solution $w$ as stated in Proposition \ref{IntroProp1}, and hence it
is different from Ni's and Takagi's work \cite{NiTa91}. Now our results can
be stated as follows:

\begin{theorem}
\label{ThmB}Under hypotheses \textup{(\ref{hypothesis1})--(\ref{hypothesis5})%
}, let $u_{\varepsilon}$ be a least-energy solution of \textup{(\ref{eqInt.1}%
)} and $\tilde P_{\varepsilon}\in\partial\Omega$ with $dist(P_{\varepsilon},%
\tilde P_{\varepsilon})=dist(P_{\varepsilon},\partial\Omega)$. Then as $%
\varepsilon\rightarrow 0^+$, after passing to a sequence $\tilde
P_{\varepsilon}$ approaches $\bar{P}\in\partial\Omega$ with

\begin{enumerate}
\item \renewcommand{\theenumi}{\roman{enumi}}

\item \label{ThmB(1)}$\displaystyle H\left( \bar{P}\right) =\max
_{P\in\partial\Omega}H\left( P\right) $, where $H\left( P\right) $ denotes
the mean curvature of $\partial\Omega$ at $P$ as stated before, and moreover

\item \label{ThmB(2)}the associated critical value $c_{\varepsilon}$ can be
estimated as $\varepsilon\rightarrow0^{+}$ as follows\/\textup{:} 
\begin{equation}
c_{\varepsilon}=\varepsilon^{N}\left\{ \frac{1}{2}c_{\ast}-\left( N-1\right)
H\left( \bar{P}\right) \gamma\varepsilon+o\left( \varepsilon \right)
\right\} ,  \label{eqInt.14}
\end{equation}
where $c_{\ast}$, $\gamma$ are as stated in Theorem \textup{\ref{ThmA}}.
\end{enumerate}
\end{theorem}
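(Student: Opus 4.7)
The plan is to produce a matching lower bound for $c_\varepsilon$ and play it off against the upper bound (\ref{eqInt.12}) from Theorem \ref{ThmA}. After passing to a subsequence with $\tilde P_\varepsilon\to\bar P\in\partial\Omega$, I aim to prove
\begin{equation*}
c_\varepsilon \;\geq\; \varepsilon^{N}\!\left\{\tfrac12 c_\ast \;-\;(N-1)H(\bar P)\,\gamma\,\varepsilon \;+\; o(\varepsilon)\right\}.
\end{equation*}
Combined with (\ref{eqInt.12}) this simultaneously forces $H(\bar P)=\max_{\partial\Omega}H$ (yielding (\ref{ThmB(1)})) and collapses both inequalities to the asymptotic (\ref{eqInt.14}) (yielding (\ref{ThmB(2)})).

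\emph{Step 1: blow-up at the peak.} Set $v_\varepsilon(y)=u_\varepsilon(P_\varepsilon+\varepsilon y)$ on $\Omega_\varepsilon:=\varepsilon^{-1}(\Omega-P_\varepsilon)$. A Moser iteration based on hypothesis (\ref{hypothesis2}) yields a uniform $L^\infty$ bound on $v_\varepsilon$, and the $C^{1,\alpha}$-regularity theory for the $m$-Laplacian gives $C^{1,\alpha}_{\mathrm{loc}}$ compactness. Because $\mathrm{dist}(P_\varepsilon,\partial\Omega)/\varepsilon\to0$ by Theorem \ref{ThmA}, the domains $\Omega_\varepsilon$ exhaust the half-space $\mathbb{R}^{N}_{+}$ determined by the inward normal of $\partial\Omega$ at $\bar P$. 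Any limit $v_\infty$ is a nonnegative nontrivial solution of (\ref{eqInt.9}); the characterizations (\ref{eqInt.7})--(\ref{eqInt.11}) together with Theorem \ref{ThmA} force $v_\infty$ to be a least-energy solution of (\ref{eqInt.9}), and by Proposition \ref{IntroProp1} together with the evenness of the radial ground state $w$, $v_\infty$ coincides with $w$ (reflected across $\{y_N=0\}$). Hence $v_\varepsilon\to w$ in $C^{1,\alpha}_{\mathrm{loc}}$.

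\emph{Step 2: uniform exponential decay.} Using hypothesis (\ref{hypothesis4}) to dominate $f(u_\varepsilon)$ by $\tfrac12 u_\varepsilon^{m-1}$ outside a fixed $\varepsilon$-neighborhood of $P_\varepsilon$, I compare $u_\varepsilon$ to an explicit radial supersolution built from Proposition \ref{IntroProp1}(ii). The weak comparison principle for $-\varepsilon^{m}\Delta_m+(\cdot)^{m-1}$ then delivers
\begin{equation*}
u_\varepsilon(x)+\varepsilon|\nabla u_\varepsilon(x)| \;\leq\; C\exp\!\bigl(-c\,|x-P_\varepsilon|/\varepsilon\bigr),\qquad x\in\overline{\Omega},
\end{equation*}
for constants $C,c>0$ independent of $\varepsilon$. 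This is the complete exponential decay advertised in the abstract, and it is indispensable in the next step.

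\emph{Step 3: intrinsic energy expansion.} Following Del Pino--Felmer \cite{DeFe99}, I introduce a smooth diffeomorphism $\Psi_\varepsilon$ that flattens $\partial\Omega$ on a fixed neighborhood of $\tilde P_\varepsilon$. Changing variables $x=\tilde P_\varepsilon+\varepsilon\Psi_\varepsilon(y)$ in $J_\varepsilon(u_\varepsilon)$, both the Jacobian of $\Psi_\varepsilon$ and the transformation law of $|\nabla u_\varepsilon|^{m}$ acquire curvature corrections of order $\varepsilon\,y_N\,H(\tilde P_\varepsilon)$. The exponential decay from Step 2 justifies dominated convergence on the rescaled half-space, and feeding in the $C^{1,\alpha}_{\mathrm{loc}}$ limit $v_\varepsilon\to w$ identifies the $O(\varepsilon)$ boundary correction as precisely $-(N-1)H(\bar P)\gamma$, with $\gamma$ as in (\ref{eqInt.13}); this exactly mirrors the computation that produced the upper bound in Theorem \ref{ThmA}. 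Combined with $I_{\mathbb{R}^{N}_{+}}(v_\varepsilon)\geq C_\ast=\tfrac12 c_\ast+o(1)$ from (\ref{eqInt.10})--(\ref{eqInt.11}), this gives the desired lower bound.

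\emph{Main obstacle.} The hardest step is Step 3: producing the boundary expansion with the exact coefficient $-(N-1)H(\bar P)\gamma$ rather than merely with an $O(\varepsilon)$ error. Because $\Delta_m$ is genuinely nonlinear for $m\neq 2$, the linearization method used in \cite{NiTa93} for the semilinear case is unavailable; one must instead expand the energy $J_\varepsilon$ directly in the flattening coordinates and show that $|\nabla v_\varepsilon|^{m}$ passes to the limit strongly enough on a relevant compact set for the $|w'(|z|)|^{m}z_{N}$-integral defining $\gamma$ to emerge as the leading coefficient. Uniform exponential decay is what makes this possible, by confining the contribution of the integrand to a compact region where the $C^{1,\alpha}_{\mathrm{loc}}$ convergence of Step 1 suffices. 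Notably the argument requires no non-degeneracy hypothesis on $w$, and therefore applies for all $m\in[2,N)$.
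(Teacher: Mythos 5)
Your overall architecture is the paper's: prove the matching lower bound $c_\varepsilon\geq\varepsilon^N\{\tfrac12 c_\ast-(N-1)H(\bar P)\gamma\varepsilon+o(\varepsilon)\}$ and play it against (\ref{eqInt.12}); the blow-up convergence $v_\varepsilon\to w$, the uniform exponential decay (the paper's Lemma \ref{LemSom.2}, which by the way requires reflecting $u_\varepsilon$ across $\partial\Omega$ before the comparison argument, to handle the Neumann condition, and gets the gradient bound from Tolksdorf's $C^{1,\alpha}$ estimates rather than from comparison), and a Del Pino--Felmer energy expansion near $\tilde P_\varepsilon$ are all present in the paper. Your Step 3 differs only in mechanics: you flatten the boundary and read the curvature off the Jacobian, whereas the paper keeps the Euclidean functional, extends $u_\varepsilon$ across $\partial\Omega$ (Lemma \ref{LemSom.4}), and writes $I_{\Omega_\varepsilon}\geq I_{\mathbb{R}^N_+\cap V_\varepsilon}+\mathrm{II}-\mathrm{III}$, evaluating the two thin wedge integrals between $\partial\Omega_\varepsilon$ and the tangent hyperplane via Lemma \ref{LemSom.3}. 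Either bookkeeping yields the coefficient $(N-1)H(\bar P)\gamma$.

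There is, however, a genuine gap at the hinge of the argument: the inequality ``$I_{\mathbb{R}^{N}_{+}}(v_\varepsilon)\geq C_\ast=\tfrac12 c_\ast+o(1)$.'' As written it is either unjustified or too weak. Since $C_\ast=\inf_v\sup_{t>0}I_{\mathbb{R}^N_+}(tv)$, the quantity $I_{\mathbb{R}^N_+}(v_\varepsilon)$ at $t=1$ need not dominate $C_\ast$; and if you instead mean $I_{\mathbb{R}^N_+}(v_\varepsilon)=C_\ast+o(1)$ by $W^{1,m}$ convergence, the unspecified $o(1)$ error is exactly of the order $\varepsilon$ you are trying to resolve, so the lower bound does not close. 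The correct device is variational on both sides: use $c_\varepsilon=\sup_{t}J_\varepsilon(tu_\varepsilon)\geq J_\varepsilon(t_\varepsilon u_\varepsilon)$ from (\ref{eqInt.7}), with $t_\varepsilon$ chosen to maximize $t\mapsto I_{\mathbb{R}^N_+}(t\tilde u^\varepsilon)$, so that the half-space term is bounded below by $C_\ast$ \emph{exactly} (up to exponentially small tails), by (\ref{eqInt.10})--(\ref{eqInt.11}). This forces you to prove $t_\varepsilon=1+o(1)$ --- the paper's Lemma \ref{LemSom.5}, which uses the strict monotonicity of $f(t)/t^{m-1}$ in (\ref{hypothesis4}) --- so that the $O(\varepsilon)$ wedge/curvature corrections are still computed on the profile $w$ rather than on $t_\varepsilon w$. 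Without the $\sup_t$ step and the $t_\varepsilon\to1$ lemma, your Step 3 produces a lower bound with an uncontrolled $o(1)\cdot\varepsilon^N$ error in the leading term, which is insufficient to extract the curvature coefficient or conclude $H(\bar P)=\max_{\partial\Omega}H$. Relatedly, if you do not flatten but work with the Euclidean functional on $\mathbb{R}^N_+$, you also need the controlled extension of $u_\varepsilon$ across $\partial\Omega$ to make $\tilde u^\varepsilon$ admissible in (\ref{eqInt.10}).
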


The organization of this paper is as follows: In Section \ref{Som}, we will
prove some lemmas which will be used in proving Theorem \ref{ThmB}. The
proof of Theorem \ref{ThmB} will be given in Section \ref{ProB}.

\section{\label{Som}Some lemmas and exponential decay of $u_{\protect%
\varepsilon}$}

First we prove the following lemma related to exponential decay of the
least-energy solution $u_{\varepsilon}$.

\begin{lemma}
\label{LemSom.2}Let $\varepsilon$ be sufficiently small and that the
least-energy solution $u_{\varepsilon}$ achieves its global maximum at some
point $P_{\varepsilon}$. Then there exist two positive constants $c_{3}$ and 
$c_{4}$ independent of $u_{\varepsilon}$ or $\varepsilon$ such that%
\begin{equation}
\begin{aligned} u_{\varepsilon}\left( x\right) &\leq c_{3}\exp\left\{
-c_{4}\left\vert x-P_{\varepsilon}\right\vert /\varepsilon\right\}\\
\left\vert\nabla u_{\varepsilon}(x)\right\vert &\leq c_3
\varepsilon^{-1}\exp\{-c_4|x-P_{\varepsilon}|/ \varepsilon\}. \end{aligned}
\label{eqSom.3}
\end{equation}
\end{lemma}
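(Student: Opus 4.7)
The plan is to rescale around $P_\varepsilon$, exploit the strict super-$(m{-}1)$ power behavior of $f$ near $0$ to turn the equation into one with an effective absorption term on the far field, and then beat the solution down by a comparison with a radial exponentially decaying supersolution. Set $v_\varepsilon(y)=u_\varepsilon(P_\varepsilon+\varepsilon y)$ on $\Omega_\varepsilon:=\varepsilon^{-1}(\Omega-P_\varepsilon)$; then $v_\varepsilon$ satisfies
\begin{equation*}
\Delta_m v_\varepsilon-v_\varepsilon^{m-1}+f(v_\varepsilon)=0\ \text{in }\Omega_\varepsilon,\qquad \partial_\nu v_\varepsilon=0\ \text{on }\partial\Omega_\varepsilon,
\end{equation*}
with $v_\varepsilon(0)=\max v_\varepsilon$.

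The first and most critical step is to establish \emph{uniform} smallness of $v_\varepsilon$ at infinity: for every $\eta>0$ there exists $R>0$, independent of $\varepsilon$, such that $v_\varepsilon(y)\le\eta$ whenever $y\in\Omega_\varepsilon$ and $|y|\ge R$. I would derive this from the energy bound $c_\varepsilon=O(\varepsilon^N)$ in Theorem~\ref{ThmA} combined with Moser iteration on unit balls: if $v_\varepsilon(y_\varepsilon)\not\to 0$ for some $|y_\varepsilon|\to\infty$, translating by $y_\varepsilon$ would extract a second nontrivial limit solving \eqref{IntroEq5} or the half-space problem \eqref{eqInt.9}, thereby adding at least $\tfrac12 c_*=C_*$ to the energy and contradicting Theorem~\ref{ThmA}. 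Once $v_\varepsilon$ is uniformly small on $\{|y|\ge R\}$, hypothesis~(\ref{hypothesis4}) (which gives $f(t)=O(t^{m-1+\delta})$ near $0$) implies $f(v_\varepsilon)\le\tfrac12 v_\varepsilon^{m-1}$ there, so
\begin{equation*}
\Delta_m v_\varepsilon\ge \tfrac12\, v_\varepsilon^{m-1}\qquad\text{on }\Omega_\varepsilon\setminus B_R.
\end{equation*}

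Next I would construct the radial supersolution $\phi(y)=Ae^{-c|y|}$. A direct computation in polar coordinates gives
\begin{equation*}
\Delta_m\phi(y)=(Ac)^{m-1}e^{-c(m-1)|y|}\bigl(c(m-1)-\tfrac{N-1}{|y|}\bigr),
\end{equation*}
so choosing $c>0$ small with $c^{m}(m-1)\le\tfrac12$ yields $\Delta_m\phi\le\tfrac12\phi^{m-1}$ for $|y|$ large (enlarging $R$ if necessary). Picking $A$ so that $\phi\ge v_\varepsilon$ on $\partial B_R\cap\Omega_\varepsilon$, and noting that both functions tend to $0$ at infinity, the weak comparison principle for the $m$-Laplacian on $\Omega_\varepsilon\setminus B_R$ gives $v_\varepsilon\le\phi$ there. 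The Neumann condition needs a brief comment: because $\phi$ is radially decreasing and centered essentially on the boundary (since $\operatorname{dist}(P_\varepsilon,\partial\Omega)/\varepsilon\to 0$), one has $\partial_\nu\phi\le 0$ on $\partial\Omega_\varepsilon\setminus B_R$, so the comparison goes through; alternatively, after straightening the boundary one extends $v_\varepsilon$ by reflection across $\partial\Omega_\varepsilon$ and compares in an interior region. Unscaling yields the pointwise decay of $u_\varepsilon$ with $c_3,c_4$ depending only on $R,\eta,f$ and not on $\varepsilon$. For the gradient bound, the $C^{1,\alpha}$ regularity of DiBenedetto--Tolksdorf applied to $v_\varepsilon$ on unit balls in $\Omega_\varepsilon$ gives $|\nabla v_\varepsilon(y_0)|\le C\|v_\varepsilon\|_{L^\infty(B_1(y_0)\cap\Omega_\varepsilon)}$; transferring back via $\nabla u_\varepsilon=\varepsilon^{-1}\nabla v_\varepsilon$ and inserting the pointwise decay just obtained produces the claimed $\varepsilon^{-1}e^{-c_4|x-P_\varepsilon|/\varepsilon}$ factor.

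The main obstacle is the uniform-in-$\varepsilon$ decay at infinity of $v_\varepsilon$; without an $\varepsilon$-independent $R$ one cannot absorb $f(v_\varepsilon)$ into $\tfrac12 v_\varepsilon^{m-1}$ on a fixed domain, and the comparison argument collapses. Everything else is a standard combination of $m$-Laplacian weak comparison and local $C^{1,\alpha}$ regularity, with extra bookkeeping needed because $P_\varepsilon$ may lie on $\partial\Omega$ so that $\partial\Omega_\varepsilon$ passes through the origin.
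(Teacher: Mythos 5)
Your outline matches the paper's strategy (rescale, use uniform smallness away from $P_\varepsilon$ to absorb $f$ into $\tfrac12 v^{m-1}$ via (\ref{hypothesis4}), compare with a decaying radial barrier, then get the gradient bound from interior $C^{1,\alpha}$ estimates), but the step where you run the comparison on $\Omega_\varepsilon\setminus B_R$ has a genuine gap in its treatment of the Neumann boundary. First, the sign condition you invoke is backwards: in the weak comparison argument with test function $(v_\varepsilon-\phi)_+$, the boundary contribution from $\partial\Omega_\varepsilon$ is $-\int|\nabla\phi|^{m-2}\partial_\nu\phi\,(v_\varepsilon-\phi)_+$, which must be $\leq 0$; since $\partial_\nu v_\varepsilon=0$, this forces $\partial_\nu\phi\geq 0$ on the Neumann part, not $\partial_\nu\phi\leq 0$. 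Second, for $\phi=Ae^{-c|y|}$ centered at the origin $\approx\tilde P_\varepsilon$ one has $\partial_\nu\phi\geq 0\iff y\cdot\nu\leq 0$ on $\partial\Omega_\varepsilon$, and this fails for a general smooth bounded domain --- already for a ball, $y\cdot\nu>0$ at boundary points far from $\tilde P_\varepsilon$, and near $\tilde P_\varepsilon$ the sign of $y\cdot\nu$ is governed by the second fundamental form and is not controlled. So the ``direct'' version of your comparison does not close.

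Your fallback --- reflect across $\partial\Omega$ and compare in an interior region --- is exactly what the paper does, but it is the bulk of the work rather than a remark. After reflecting via the tubular neighborhood, $\bar u_\varepsilon$ solves $\varepsilon^m\limfunc{div}(\tilde A(x,\nabla\bar u_\varepsilon))+B(x,\bar u_\varepsilon)=0$ with a perturbed, non-isotropic operator, and one must re-verify that a radial barrier is still a supersolution for this operator; the paper's estimate (\ref{equation3}) does this, at the cost of shrinking the collar width $\tilde\gamma$ and taking the decay rate $\lambda_*$ small relative to a constant $K$ depending on $\Omega$. Moreover the reflected region has an outer boundary $\partial\Omega^{\tilde\gamma}$ on which a single global exponential centered at $P_\varepsilon$ need not dominate $\bar u_\varepsilon$, so a one-shot global comparison still fails; the paper avoids this by running the comparison ball by ball, on each $B_r(x_0)\subset\Omega^{\tilde\gamma}\setminus B_{\varepsilon R_0}$ using the barrier $\bigl(\sup\bar u_\varepsilon\bigr)\cosh(\lambda_*\rho/\varepsilon)/\cosh(\lambda_* r/\varepsilon)$, which automatically dominates on $\partial B_r(x_0)$. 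Your first step (uniform smallness of $v_\varepsilon$ for $|y|\geq R$) is argued by energy splitting rather than the paper's route through the established $C^1_{\mathrm{loc}}$ convergence $\bar u^\varepsilon\to w$ plus the strong maximum principle; both are viable. The gradient estimate via Tolksdorf-type $C^{1,\alpha}$ bounds is fine, provided you use the reflected equation near $\partial\Omega_\varepsilon$ to get boundary regularity, as the paper does.
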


Before beginning to prove this lemma, we give a remark on it.

\begin{remark}
\label{RemSom.1}For the case $m=2$, under the assumption of non-degeneracy
of the linearized operator $\Delta-1+f^{\prime}\left( w\right) $, where $w$
is the unique ground state of \textup{(\ref{IntroEq5})}, Ni and Takagi \cite%
{NiTa93} showed that $u_{\varepsilon}\left( x\right) $ can be written as%
\begin{equation}
u_{\varepsilon}\left( x\right) =w\left( x\right) +\varepsilon\phi _{1}\left(
x\right) +o\left( \varepsilon\right)  \label{eqSom.4}
\end{equation}
and $\phi_{1}\left( x\right) $ enjoys the exponential-decay property (\cite%
{NiTa93}). Clearly we cannot derive exponential decay of $u_{\varepsilon
}\left( x\right) $ as stated in Lemma (\ref{LemSom.2}) from (\ref{eqSom.4})
even though both $w\left( x\right)$ and $\varepsilon\phi _{1}\left( x\right)$
have exponential decay property.
\end{remark}

\begin{proof}[Proof of Lemma \textup{\protect\ref{LemSom.2}}]
Since $\partial\Omega$ is a smooth compact submanifold of $R^{N},$ it
follows from the tubular neighborhood theorem \cite{Hirsch} that there
exists a constant $\omega\left( \Omega\right) >0$ which depends only on $%
\Omega$ such that $\Omega _{I}=\left\{ x\in\overline{\Omega},d\left(
x,\partial\Omega\right) <\omega\left( \Omega\right) \right\} $ is
diffeomorphic to the inner normal bundle 
\begin{equation*}
\left( \partial\Omega\right) _{I}^{N}=\left\{ \left( x,y\right)
:x\in\partial\Omega,y\in\left( -\omega\left( \Omega\right) ,0\right]
\nu_{x}\right\} ,
\end{equation*}
here $\nu_{x}$ is the unit outer normal of $\partial\Omega$ at $x,$ and the
diffeomorphism is defined as follows: $\forall x\in\Omega_{I},$ there exists
an unique $\hat{x}\in\partial\Omega$ such that $d\left( x,\hat{x}\right)
=d\left( x,\partial\Omega\right) ,$ then $\Phi_{\ast}:x\longrightarrow
\left( \hat{x},-d\left( x,\hat{x}\right) \nu_{\hat{x}}\right) .$ Moreover
this diffeomorphism satisfies $\Phi_{\ast}|_{\partial\Omega}=\text{Identity}%
. $ Similarly, let $\Omega_{O}=\left\{ x\in\mathbb{R}^{N}\setminus\Omega
,d(x,\partial\Omega)<\omega\left( \Omega\right) \right\} .$ Then $\Omega_{O}$
is diffeomorphic to the outer normal bundle 
\begin{equation*}
\left( \partial\Omega\right) _{O}^{N}=\left\{ \left( x,y\right)
:x\in\partial\Omega,y\in\left[ 0,\omega\left( \Omega\right) \right)
\nu_{x}\right\} ,
\end{equation*}
and the diffeomorphism is given as follows. $\forall x\in\Omega_{O},$ there
exists an unique $\bar{x}\in\partial\Omega$ such that $d(x,\bar{x})=d\left(
x,\partial\Omega\right) ,$ and then $\Phi_{\#}:x\longrightarrow\left( \bar{x}%
,d\left( x,\bar{x}\right) \nu_{\bar{x}}\right) $ and $\Phi
_{\#}|_{\partial\Omega}=\text{Identity}.$ Note that $\left( \partial
\Omega\right) _{I}^{N}$ is clearly diffeomorphic to $\left( \partial
\Omega\right) _{O}^{N}$ via the following reflection $\Phi^{\ast}:\left(
\partial\Omega\right) _{I}^{N}\longrightarrow\left( \partial\Omega\right)
_{O}^{N}$ defined by $\Phi^{\ast}\left( \left( x,y\right) \right) =\left(
x,-y\right) .$ Therefore, $\Phi=\Phi_{\ast}^{-1}\circ\Phi^{\ast-1}\circ
\Phi_{\#}:\Omega_{O}\longrightarrow\Omega_{I}$ is the desired diffeomorphism
and $\Phi|_{\partial\Omega}=\text{Identity}.$ Moreover, if we let $%
x=\Phi(z)=\left( \Phi_{1}(z),\cdots,\Phi_{N}(z)\right) ,$ $z\in\Omega_{O},$
and $z=\Psi(x)=\Phi^{-1}(x)=\left( \Psi_{1}(x),\cdots,\Psi_{N}(x)\right) ,$ $%
x\in\Omega_{I},$ $g_{ij}=\sum_{k=1}^{N}\frac{\partial\Phi_{k}}{\partial z_{i}%
}\frac{\partial\Phi_{k}}{\partial z_{j}},$ $g^{ij}=\sum_{k=1}^{N}\frac{%
\partial\Psi_{i}}{\partial x_{k}}\frac{\partial\Psi_{j}}{\partial x_{k}}%
\left( \Phi\left( z\right) \right) ,$ we have $g_{ij}|_{\partial\Omega
}=g^{ij}|_{\partial\Omega}=\delta_{ij}$ with $\delta_{ij}$ being the
Kronecker symbol. Denote $G=\left( g^{ij}\right) $ and $A=G-I$ with $I$
being the $N\times N$ identity matrix, $g(x)=\text{det}\left( g_{ij}\right) $
and $\hat{u}_{\varepsilon}(x)=u_{\varepsilon}\left( \Phi\left( x\right)
\right) $ for $x\in\Omega_{O}.$ Then $\hat{u}_{\varepsilon}(x)$ satisfies
the following equations: 
\begin{equation}
\left\{ \begin{aligned} &\varepsilon^m \text{L}\hat
u_{\varepsilon}-\sqrt{g}\hat u_{\varepsilon}^{m-1}+ \sqrt{g}f\left(\hat
u_{\varepsilon}\right)&=0,\quad\hat u_{\varepsilon}>0\quad \text{in}\quad
\Omega_O\\ \nonumber&\frac{\partial \hat u_{\varepsilon}}{\partial\nu}=0,
\quad\text{on}\quad\partial\Omega, \end{aligned}\right.  \label{equation*1}
\end{equation}
where 
\begin{equation*}
\begin{aligned} \text{L}\hat u
_{\varepsilon}&=\sum_{i=1}^N\frac{\partial}{\partial
x_i}\left\{\left[\sum^N_{s,l=1}g^{sl} \frac{\partial\hat
u_{\varepsilon}}{\partial x_s}\frac{\partial\hat u_{\varepsilon}} {\partial
x_l}\right]^{\frac{m-2}{2}} \sqrt{g}\sum_{j=1}^N g^{ij}\frac{\partial\hat
u_{\varepsilon}}{\partial x_j}\right\}\\
&=\text{Tr}\left\{D\left(\left[\nabla\hat u_{\varepsilon} G\left(\nabla\hat
u_{\varepsilon}\right)^T\right]^{\frac{m-2}{2}} \sqrt{g}\left(\nabla\hat
u_{\varepsilon}\right)G\right)\right\}, \end{aligned}
\end{equation*}
where Tr means taking the trace of a square matrix.

For $0<\tilde{\gamma}\leq\omega(\Omega)$, let $\Omega_{O\tilde{\gamma}%
}=\left\{ x\in\overline{\Omega_{O}},d(x,\partial\Omega)<\tilde{\gamma }%
\right\} .$ We know $\Vert A\Vert_{C^{0}}$ can be made arbitrarily small by
making $\tilde{\gamma}$ sufficiently small. Next we define 
\begin{equation*}
\bar{u}_{\varepsilon}=\left\{ \begin{aligned} &u_{\varepsilon}(x),\quad
x\in\Omega\\ &\hat u_{\varepsilon}(x),\quad x\in\Omega_O, \end{aligned}%
\right.
\end{equation*}%
\begin{equation*}
\tilde{g}_{ij}=\left\{ \begin{aligned} &\delta_{ij},\quad x\in
\overline{\Omega}\\ &g_{ij},\quad x\in \Omega_O,\\ \end{aligned}\right.
\end{equation*}%
\begin{equation*}
\tilde{g}^{ij}=\left\{ \begin{aligned} &\delta_{ij},\quad x\in
\overline{\Omega}\\ &g^{ij},\quad x\in \Omega_O,\\ \end{aligned}\right.
\end{equation*}
and $\tilde{A}(x,\xi)=\left( \tilde{A}_{1}(x,\xi),\cdots,\tilde{A}%
_{N}(x,\xi)\right) $ for $\xi=\left( \xi_{1},\cdots,\xi_{N}\right) $ with 
\begin{equation*}
\tilde{A}_{i}(x,\xi)=\left[ \sum_{s,l=1}^{N}\tilde{g}^{sl}\xi_{s}\xi _{l}%
\right] ^{\frac{m-2}{2}}\sqrt{\tilde{g}}\sum_{j=1}^{N}\tilde{g}^{ij}\xi_{j}
\end{equation*}
and $\tilde{g}=\text{det}(\tilde{g}_{ij}),$ $B(x,u)=\sqrt{\tilde{g}}\left(
-u^{m-1}+f(u)\right) .$ Then $\bar{u}_{\varepsilon}(x)$ satisfies 
\begin{equation}
\varepsilon^{m}\limfunc{div}\left( \tilde{A}(x,\nabla\bar {u}%
_{\varepsilon})\right) +B(x,\bar{u}_{\varepsilon})=0\quad\text{in}\quad%
\overline{\Omega}\bigcup\Omega_{O}  \label{equation2}
\end{equation}
in the weak sense.

For any ball $B_{r}(x_{0})\subset\overline{\Omega}\bigcup\Omega_{O}$ with
radius $r$ and center $x_{0}\in\Omega$, let $\rho=|x-x_{0}|.$ Then for any
smooth increasing function $\phi=\phi(\rho)$ we have 
\begin{equation*}
\begin{aligned} &\left[(\nabla\phi)
G(\nabla\phi)^T\right]^{\frac{m-2}2}\sqrt{g}(\nabla\phi)G\\
&=\left\vert\nabla\phi(I+A)(\nabla\phi)^T\right\vert^{\frac{m-2}2}\sqrt{%
\text{det}(I+A)^{-1}}\nabla\phi(I+A)\\
&=\left\vert\nabla\phi\right\vert^{m-2}\nabla\phi+\int^1_0\frac{d}{dt}\left(
\left\vert\nabla\phi(I+tA)(\nabla\phi)^T\right\vert^{\frac{m-2}2}\sqrt{%
\text{det}(I+tA)^{-1}}\nabla\phi(I+tA)\right)dt\\
&=\left\vert\nabla\phi\right\vert^{m-2}\nabla\phi\\
&+\frac{m-2}2\int^1_0\left\vert\nabla\phi(I+tA)(\nabla\phi)^T\right\vert^{%
\frac{m-4}2}
\left((\nabla\phi)A(\nabla\phi)^T\right)\sqrt{\text{det}(I+tA)^{-1}}\nabla%
\phi(I+tA)dt\\ &+\int^1_0\left\vert\nabla\phi(I+t
A)(\nabla\phi)^T\right\vert^{\frac{m-2}2}\frac{\frac{d}{dt}
\left(\text{det}(I+tA)^{-1}\right)}
{2\sqrt{\text{det}(I+tA)^{-1}}}\nabla\phi(I+tA)dt\\
&+\int^1_0\left\vert\nabla\phi(I+tA)(\nabla\phi)^T\right\vert^{\frac{m-2}2}%
\sqrt{\text{det}(I+tA)^{-1}}(\nabla\phi)Adt. \end{aligned}
\end{equation*}
Therefore 
\begin{equation}
\begin{aligned}
\text{Tr}&\left[D\left(\left[(\nabla\phi)G(\nabla\phi)^T\right]^{%
\frac{m-2}2} \sqrt{g}(\nabla\phi)G\right)\right]\\ &\leq \frac{3}2
\left\vert\left(\left\vert\phi^{\prime}\right\vert^{m-2}\phi^{\prime}%
\right)^{\prime}\right\vert +
\frac{3(N-1)}{2\rho}\left\vert\phi^{\prime}\right\vert^{m-2}\phi^{\prime}+
K\left\vert\phi^{\prime}\right\vert^{m-2}\phi^{\prime} \end{aligned}
\label{equation3}
\end{equation}
by taking $\tilde\gamma$ sufficiently small, here $K>0$ is a constant
depending only on $\Psi,$ hence only on $\Omega$ and $\phi^{\prime} =\frac{%
d\phi(\rho)}{d\rho}.$

{}From now on $\tilde\gamma=\tilde\gamma(\Omega)$ is fixed such that (i)\quad%
$\frac{3}4\leq\sqrt{g}\leq\frac{5}4$, (ii) (\ref{equation3}) holds for any
smooth increasing radial function $\phi(\rho)$ and (iii) $\frac{3}%
4|\xi|^{m}\leq\tilde A(x,\xi)\cdot\xi\leq\frac{5}4|\xi|^{m}$ for any $\xi
=(\xi_{1},\cdots,\xi_{N}).$ Denote $\Omega^{\tilde\gamma}=\Omega\cup%
\Omega_{O\tilde\gamma}.$

Let $\Omega_{\varepsilon}=\frac{1}{\varepsilon}\left(\Omega-P_{\varepsilon}%
\right)$ and $u^{\varepsilon}(x)
=u_{\varepsilon}(P_{\varepsilon}+\varepsilon x)$ for $x\in\Omega_{%
\varepsilon}.$ Then $u^{\varepsilon}$ is a solution to the following
problem: 
\begin{equation}
\left\{ \begin{aligned}&\Delta_mu^{\varepsilon}-\left(u^{\varepsilon}%
\right)^{m-1}+f(u^{\varepsilon})=0, u^{\varepsilon}>0 \quad\text{in }\quad
\Omega_{\varepsilon} \\ &\frac{\partial u^{\varepsilon}}{\partial
n}=0,\quad\text{on}\quad\partial\Omega_{\varepsilon}, \end{aligned}\right.
\label{aaequation4}
\end{equation}
where $n$ is the unit outer normal of $\partial\Omega_{\varepsilon}.$
Similarly, let $\Omega^{\tilde\gamma}_{\varepsilon}=\frac{1}{\varepsilon}%
\left(\Omega^{\tilde\gamma}-P_{\varepsilon}\right)$ and $\bar
u^{\varepsilon}(x) =\bar u_{\varepsilon}(P_{\varepsilon}+\varepsilon x)$ for 
$x\in\Omega^{\tilde\gamma}_{\varepsilon}.$ Since $\bar u^{\varepsilon}$
converges to the unique radial least-energy solution $w$ of (\ref{IntroEq5})
in $C^{1}_{loc}(\mathbb{R}^{N}) \cap W^{1,m}(\mathbb{R}^{N})$ as $%
\varepsilon\rightarrow0^{+}$ (see the proof of Theorem 1.2 of \cite{LiZh04c}%
) and $w$ satisfies: 
\begin{equation*}
\begin{aligned} &\text{(i)}\quad w\quad \text{is radial,
i.e.,}w(x)=w(|x|)=w(r)>0\\ &\text{(ii)} \lim_{r\rightarrow\infty}w(r)r^{
\frac{N-1}{m(m-1)}}e^{\left(\frac{1}{m-1}\right)^{\frac{1}{m}}r}=C_0>0
\end{aligned}
\end{equation*}
(see Theorem 1 of \cite{LiZh04a}) which yields $w(r)\leq\kappa e^{-\mu r}$
for a constant $\kappa>0$ and $\mu=\left( \frac{1}{m-1}\right) ^{\frac{1}{m}%
}.$ First we fix a constant $\eta>0$ such that $\frac{1}{8} t^{m-1}>f(t)$
for $t\in(0,\eta].$ From hypothesis (\ref{hypothesis4}) it follows that such
an $\eta$ exists. Then there exist $\varepsilon_{0}>0$ sufficiently small
and $R_{0}$ sufficiently large such that $4\kappa\exp\{-\mu R_{0}\} <\eta$
and $\|\bar
u^{\varepsilon}-w\|_{C^{0}(B_{R_{0}}(0)\cap\Omega_{\varepsilon})}\leq
\kappa\exp\{-\mu R_{0}\},$ which yields 
\begin{equation*}
u^{\varepsilon}\vert_{(\partial
B_{R_{0}}(0))\cap\Omega_{\varepsilon}}\leq2\kappa\exp\{-\mu R_{0}\}.
\end{equation*}
Note that 
\begin{equation*}
\left\{%
\begin{array}{l}
\Delta_m u^{\varepsilon}-\frac78(u^{\varepsilon})^{m-1}=\frac{1}{8}%
(u^{\varepsilon})^{m-1}-f(u^{\varepsilon})>0 \quad\text{in}%
\quad\Omega_{\varepsilon}\setminus B_{R_0}(0), \\[2pt] 
\frac{\partial u^{\varepsilon}}{\partial n}=0\quad\text{on}%
\quad\partial\Omega_{\varepsilon}\setminus B_{R_0}(0), \\[2pt] 
u^{\varepsilon}\leq2\kappa\exp\{-\mu R_{0}\}\quad\text{on}\quad\partial
B_{R_0}(0)\cap\overline{\Omega_{\varepsilon}}.%
\end{array}%
\right.
\end{equation*}
Then we have 
\begin{equation*}
u^{\varepsilon}(x)\leq2\kappa\exp\{-\mu R_{0}\},\quad\text{for} \quad
x\in\Omega_{\varepsilon}\setminus B_{R_{0}}(0)
\end{equation*}
due to the strong maximum principle (\cite{Vazquez}). We get by scaling back
that 
\begin{equation*}
u_{\varepsilon}\vert_{\Omega\setminus B_{\varepsilon R_{0}}(0)}\leq2\kappa
\exp\{-\mu R_{0}\}
\end{equation*}
and 
\begin{equation}
\begin{aligned} u_{\varepsilon}(x)&\leq
w\left(\frac{|x|}{\varepsilon}\right) +\kappa \exp\{-\mu R_0\} \leq \kappa
\exp\{- \frac{ \mu |x|}{\varepsilon}\} +\kappa \exp\{-\mu R_0\}\\ &\leq
2\kappa \exp\{-\frac{\mu |x|}{\varepsilon}\} \end{aligned}  \label{equation6}
\end{equation}
for $x\in\Omega\cap B_{\varepsilon R_{0}}(0).$

{}From definition of $\bar{u}_{\varepsilon}$ we know 
\begin{equation*}
\bar{u}_{\varepsilon}(x)\leq2\kappa\exp\{-\frac{\mu\left(|x|-2\text{dist}
\left(P_{\varepsilon},\partial\Omega\right)\right)}{\varepsilon}\}\leq
4\kappa\exp\{-\frac{\mu|x|}{\varepsilon}\} \quad\text{for}\quad
x\in\Omega^{\tilde\gamma}\cap B_{\varepsilon R_{0}}(0)
\end{equation*}
for $\varepsilon\in(0,\varepsilon_0]$ with $\varepsilon_0$ sufficiently
small due to the fact $\text{dist}(P_{\varepsilon},\partial\Omega)=o(%
\varepsilon)$ as $\varepsilon\rightarrow 0^+.$ Note that 
\begin{equation*}
\sup_{\Omega^{\tilde\gamma} \setminus B_{\varepsilon R_{0}}(0)}\bar{u}%
_{\varepsilon}\leq4\kappa\exp\{-\mu R_{0}\}.
\end{equation*}
Choice of $R_{0}$ and $\tilde{\gamma}$ tells us for any $0<t\leq4\kappa
\exp\{-\mu R_{0}\}$ 
\begin{equation*}
B(x,t)=\sqrt{\tilde{g}}\left( -t^{m-1}+f(t)\right) \leq-\frac{1}{2}t^{m-1}.
\end{equation*}
$\forall x_{0}\in\Omega\setminus B_{\varepsilon R_{0}}(0)$ and $%
B_{r}(x_{0})\subset\Omega^{\tilde\gamma}\setminus B_{\varepsilon R_{0}}(0),$
define 
\begin{equation}
\begin{aligned} \phi(x)&=\phi(\rho)=\phi(|x-x_0|)\\\nonumber &=
\left\{\sup_{\Omega^{\tilde\gamma}\setminus B_{\varepsilon R_0}(0)} \bar
u_{\varepsilon}\right\}
\frac{\text{cosh}\left(\frac{\lambda_*\rho}{\varepsilon}\right)}
{\text{cosh}\left(\frac{\lambda_*r}{\varepsilon}\right)}, \end{aligned}
\label{equation5}
\end{equation}
where $\lambda_{\ast}>0$ is a constant to be determined later. Simple
calculations show that 
\begin{equation*}
\begin{aligned} &(i)\quad
\phi^{\prime}(\rho)>0\quad\text{and}\quad\left(\left\vert\phi^{\prime}\right%
\vert^{m-2} \phi^{\prime}\right)^{\prime}>0;\\ &(ii)\quad\begin{array}{l}
\varepsilon^m\left[\frac{3}{2}\left(\left\vert\phi^{\prime}\right\vert^{m-2}%
\phi^{\prime}\right)^{\prime} +\frac{3(N-1)}{2\rho}
\left\vert\phi^{\prime}\right\vert^{m-2}\phi^{\prime}+ K
\left\vert\phi^{\prime}\right\vert^{m-2}\phi^{\prime}\right]-\frac{1}{2}
\phi^{m-1}\\ =\left[\frac32\left(m-1\right)\left(\lambda_{\ast}\right)^m
\left(\tanh\left(\frac{\lambda_{\ast}\rho}{\varepsilon}\right)\right)^{m-2}
+\frac32\left(\lambda_{\ast}\right)^{m}
\left(\frac{\tanh\left(\frac{\lambda_{\ast}\rho}{\varepsilon}\right)}
{\left(\frac{\lambda_{\ast}\rho}{\varepsilon}\right)^{\frac1{m-1}}}%
\right)^{m-1}\right.\\ \left.+\varepsilon\left(\lambda_{\ast}\right)^{m-1}K
\left(\tanh\left(\frac{\lambda_{\ast}\rho}{\varepsilon}\right)\right)^{m-1}
-\frac12\right]\phi^{m-1}\\ \leq 0 \end{array} \end{aligned}
\end{equation*}
for any $0<\lambda_{\ast}\leq\hat{\lambda},$ where $\hat{\lambda}>0$ is a
small constant depending only on $m$ and $\Omega$ through $K$. We remark
that we have used the fact $\max_{r\in\lbrack0,\infty)}\frac{\text{tanh}r}{%
r^{\frac{1}{m-1}}}<\infty$ for $m\geq2.$ From now on we choose $\lambda
_{\ast}=\hat{\lambda}.$

Therefore we have 
\begin{equation*}
\begin{aligned} &\varepsilon^m\operatorname*{div}(\tilde A(x,\nabla\bar
u_{\varepsilon})) -\frac{1}{2}\bar u_{\varepsilon}^{m-1}\geq 0\quad{in}\quad
B_r(x_0),\\ &\varepsilon^m\operatorname*{div}(\tilde
A(x,\nabla\phi))-\frac{1}{2}\phi^{m-1}\leq 0\quad{in}\quad B_r(x_0).
\end{aligned}
\end{equation*}
Clearly 
\begin{equation*}
\phi|_{\partial B_{r}(x_{0})}\geq\bar{u}_{\varepsilon}|_{\partial
B_{r}(x_{0})}.
\end{equation*}
Then from the Comparison Theorem (Theorem 10.1 of \cite{Serrin}) it follows
that 
\begin{equation*}
\phi(x)\geq\bar{u}_{\varepsilon}(x)\quad{in}\quad B_{r}(x_{0}).
\end{equation*}
In particular, $\phi(x_{0})\geq\bar{u}_{\varepsilon}(x_{0}).$ Thus we get 
\begin{equation*}
u_{\varepsilon}(x_{0})\leq\left( \sup_{\Omega^{\tilde\gamma} \setminus
B_{\varepsilon R_{0}}(0)}\bar {u}_{\varepsilon}\right) \exp\{-\frac{%
\lambda_{\ast}r}{\varepsilon}\}.
\end{equation*}
Choosing $r=d\left(x_{0},\partial\left(\Omega^{\tilde\gamma} \setminus
B_{\varepsilon R_{0}}(0)\right)\right)$ we get 
\begin{equation*}
u_{\varepsilon}(x_{0})\leq4\kappa\exp\{-\mu R_{0}-\frac{\lambda_{\ast}r}{%
\varepsilon}\}\leq2\kappa\exp\{-\frac{\tilde{\lambda}(\varepsilon R_{0}+r)}{%
\varepsilon}\}
\end{equation*}
with $\tilde{\lambda}=\min\{\mu,\lambda_{\ast}\}.$ Note that $x_{0}$ belongs
to one of the following two cases: 
\begin{equation*}
\begin{aligned} &(i)\quad
d\left(x_0,\partial\left(\Omega^{\tilde\gamma}\setminus B_{\varepsilon
R_0}(0)\right)\right)=d\left(x_0,\partial B_{\varepsilon R_0}(0)\right),\\
&(ii)\quad d\left(x_0,\partial\left(\Omega^{\tilde\gamma}\setminus
B_{\varepsilon R_0}(0)\right)\right)=d\left(x_0,
\partial\Omega^{\tilde\gamma}\right). \end{aligned}
\end{equation*}
For case (i) we have $d(x_{0},P_{\varepsilon})\leq\varepsilon R_{0}+r$ and
therefore 
\begin{equation}
u_{\varepsilon}(x_{0})\leq4\kappa\exp\{-\frac{\tilde{\lambda}%
d(x_{0},P_{\varepsilon})}{\varepsilon}\}.  \label{equation7}
\end{equation}
For case (ii) we have $r\geq\tilde{\gamma}$ and thus 
\begin{equation}
\begin{aligned} u_{\varepsilon}(x_0)&\leq 4\kappa\exp\{-\tilde\lambda
\frac{\varepsilon R_0+r}{\varepsilon}\}\leq
4\kappa\exp\{-\frac{\tilde\lambda\tilde\gamma}{\varepsilon}\}\\ &\leq
4\kappa\exp\{-\tilde\lambda\frac{\tilde\gamma}{\text{diam}(\Omega)}\cdot%
\frac{d(x_0,P_{\varepsilon})} {\varepsilon}\}, \end{aligned}
\label{equation8}
\end{equation}
Combining (\ref{equation6}), (\ref{equation7}) and (\ref{equation8})
together and letting $\tilde{c_{3}}=4\kappa,$ $\tilde{c_{4}}=\min\{\mu,%
\tilde{\lambda},\frac {\tilde{\lambda}\tilde{\gamma}}{\text{diam}(\Omega)}\}$
yields 
\begin{equation}
u_{\varepsilon}(x)\leq \tilde{c_{3}}\exp\{-\frac{\tilde{c_{4}}%
|x-P_{\varepsilon}|}{\varepsilon}\}.  \label{adduexp}
\end{equation}

Next we show the estimate for $\left\vert\nabla u_{\varepsilon}\right\vert$
holds. First from (\ref{aaequation4}) it follows that 
\begin{equation}
\Delta_mu^{\varepsilon}=\left(u^{\varepsilon}\right)^{m-1}-f(u^{%
\varepsilon}), u^{\varepsilon}>0 \quad\text{in }\quad \Omega_{\varepsilon}
\label{addequation4}
\end{equation}
For $x\in\Omega_{\varepsilon}$ and dist$(x,\partial\Omega_{\varepsilon})\geq
1$, consider (\ref{addequation4}) in the unit ball centered at $x$, i.e., $%
B_1(x)$. Then by an $C^{1,\alpha}$ estimate (see \cite{Tol}, for example)
there exists two constants $C>0$ and $\alpha^*\in (0,1)$ which are
independent of $\varepsilon$ such that 
\begin{equation}
\begin{aligned} \|u^{\varepsilon}\|_{C^{1,\alpha^*}(B_{\frac 1 2}(x))}&\leq
C\left(\|u^{\varepsilon}\|_{L^{\infty}(B_1(x))}
+\|\left(u^{\varepsilon}\right)^{m-1}-f(u^{\varepsilon})\|^{%
\frac{1}{m-1}}_{L^{\infty}(B_1(x))}\right)\\ &\leq
c_3^*\exp\{-c_4^*|x-P_{\varepsilon}|\}, \end{aligned}  \label{addexp1}
\end{equation}
where we have used (\ref{adduexp}) and the fact that $u^{\varepsilon}(x)
=u_{\varepsilon}(P_{\varepsilon}+\varepsilon x)$ for $x\in\Omega_{%
\varepsilon}.$ Especially we have 
\begin{equation}
|\nabla u^{\varepsilon}(x)|\leq c_3^*\exp\{-c_4^*|x-P_{\varepsilon}|\},
\label{addexp2}
\end{equation}
for $x\in\Omega_{\varepsilon}$ and dist$(x,\partial\Omega_{\varepsilon})\geq
1$. For $x\in\Omega_{\varepsilon}$ with dist$(x,\partial\Omega_{%
\varepsilon})<1$. Let $x_0\in\partial\Omega_{\varepsilon}$ be a point such
that dist$(x, x_0)=$ dist$(x,\partial\Omega_{\varepsilon})$ and consider $%
\bar{ u}^{\varepsilon}(x)=\bar{ u}_{\varepsilon}(P_{\varepsilon}+\varepsilon
x)$ in $B_2(x_0)$, the ball of radius $2$ centered at $x_0$, then from (\ref%
{equation2}) it follows that $\bar{u}^{\varepsilon}$ satisfies 
\begin{equation}
\limfunc{div}\left( \tilde{A}(P_{\varepsilon}+\varepsilon x,\nabla\bar {u}%
^{\varepsilon})\right) +B(P_{\varepsilon}+\varepsilon x,\bar{u}%
^{\varepsilon})=0\quad\text{in}\quad B_2(x_0)  \label{add1equation2}
\end{equation}
in the weak sense. Then applying an $C^{1,\alpha}$ estimate (see \cite{Tol},
for example) again yields as above that there exists two constants $C>0$ and 
$\alpha_*\in (0,1)$ which are independent of $\varepsilon$ such that 
\begin{equation}
\begin{aligned} \|\hat{u}^{\varepsilon}\|_{C^{1,\alpha_*}(B_{1}(x_0))}&\leq
C\left(\|\hat {u}^{\varepsilon}\|_{L^{\infty}(B_2(x_0))}
+\|B(P_{\varepsilon} +\varepsilon x,
\hat{u}^{\varepsilon})\|^{\frac{1}{m-1}}_{L^{\infty}(B_2(x_0))}\right)\\
\nonumber&\leq c_3^*\exp\{-c_4^*|x-P_{\varepsilon}|\} \end{aligned}
\label{addexp11}
\end{equation}
by adjusting $c_3^*$ and $c_4^*$ if it is necessary. Especially we have 
\begin{equation}
|\nabla u^{\varepsilon}(x)|\leq c_3^*\exp\{-c_4^*|x-P_{\varepsilon}|\},
\label{addexp21}
\end{equation}
Thus combining (\ref{addexp1}) and (\ref{addexp21}) together and scaling
back we have for $x\in \Omega$ 
\begin{equation}
|\nabla u_{\varepsilon}(x)|\leq c_3^*\varepsilon^{-1}\exp\{-c_4^*\frac{%
|x-P_{\varepsilon}|}{\varepsilon}\}.  \notag
\end{equation}
Proof of Lemma \ref{eqSom.3} is completed by letting $c_3=\max\{\tilde{c_3},
c_3^*\}$ and $c_4=\min\{\tilde{c_4}, c_4^*\}$.
\end{proof}

\begin{remark}
\label{RemSom.2} Our proof of the Lemma \ref{eqSom.3} with necessary minor
modifications also works well for elliptic systems.
\end{remark}

Next we present a lemma related to extensions of $u_{\varepsilon}$.

\begin{lemma}
\label{LemSom.4}There exists a $C^{1}$-extension $\tilde{u}_{\varepsilon}$
of $u_{\varepsilon}$ which has compact support in $\mathbb{R}^{N}$ and
satisfies

\begin{enumerate}
\item \renewcommand{\theenumi}{\roman{enumi}}

\item \label{LemSom.4(1)}$\displaystyle\left\Vert \tilde{u}_{\varepsilon
}\right\Vert _{W^{1,m}\left( \mathbb{R}^{N}\right) }\leq c_{5}\left\Vert
u_{\varepsilon}\right\Vert _{W^{1,m}\left( \Omega\right) }$ and $%
\displaystyle\left\Vert \tilde{u}_{\varepsilon}\right\Vert _{C^{1}\left( 
\mathbb{R}^{N}\right) }\leq c_{5}\left\Vert u_{\varepsilon}\right\Vert
_{C^{1}\left( \bar{\Omega}\right) }$,

\item \label{LemSom.4(2)}$\tilde{u}_{\varepsilon}$ also has the
exponential-decay property as stated in Lemma \textup{\ref{LemSom.2}}, i.e.,
there exists an absolute constant $\lambda\geq1$ such that%
\begin{equation}
\begin{aligned} 0\leq\tilde{u}_{\varepsilon}&\leq c_{3}\lambda\exp\left\{
-\frac{c_{4}}{\lambda} \frac{\left\vert x-P_{\varepsilon}\right\vert
}{\varepsilon }\right\},\\ \left\vert\nabla
\tilde{u}_{\varepsilon}(x)\right\vert &\leq c_3\lambda
\varepsilon^{-1}\exp\{-\frac{c_4}{\lambda}\frac{|x-P_{\varepsilon}|}{
\varepsilon}\}. \end{aligned}  \label{eqSom.22}
\end{equation}
\end{enumerate}

\noindent\setcounter{saveenumi}{\value{enumi}}and

\begin{enumerate}
\item \setcounter{enumi}{\value{saveenumi}}\renewcommand{\theenumi}{%
\roman{enumi}}

\item \label{LemSom.4(3)}there exists a positive constant $\tilde\delta
=\tilde\delta\left( \Omega\right) $ such that for any $P\in\partial\Omega$, $%
\tilde{u}_{\varepsilon}|_{B_{\tilde\delta}\left( P\right) \setminus\Omega}$
is the reflection of $u_{\varepsilon}$ through $\partial\Omega$.
\end{enumerate}
\end{lemma}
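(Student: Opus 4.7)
I will build $\tilde{u}_\varepsilon$ by reflecting $u_\varepsilon$ across $\partial\Omega$ inside a fixed thin tubular shell and then multiplying by a smooth cutoff to obtain compact support. Using the reflection diffeomorphism $\Phi\colon\Omega_O\to\Omega_I$ constructed in the proof of Lemma \ref{LemSom.2} (with $\omega(\Omega)$ possibly shrunk to a smaller positive constant depending only on $\Omega$, as explained below) and a cutoff $\chi\in C_c^\infty(\mathbb{R}^N)$ with $\chi\equiv 1$ on a neighborhood of $\overline{\Omega}$ and $\operatorname{supp}\chi\subset\overline{\Omega}\cup\Omega_O$, set
\[
\tilde{u}_\varepsilon(x)=
\begin{cases}
u_\varepsilon(x), & x\in\overline{\Omega},\\
\chi(x)\,u_\varepsilon(\Phi(x)), & x\in\Omega_O,\\
0, & \text{otherwise.}
\end{cases}
\]
Property \textup{(iii)} is then ensured by choosing $\tilde\delta=\tilde\delta(\Omega)$ smaller than the $\Omega_O$-distance from $\partial\Omega$ to $\{\chi\ne 1\}$.

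For part \textup{(i)}, $C^1$ regularity across $\partial\Omega$ holds because the two definitions agree there (since $\Phi|_{\partial\Omega}=\mathrm{Id}$), the tangential derivatives match for the same reason, and the normal derivatives also match: $D\Phi$ negates the normal component of a gradient, while $\partial u_\varepsilon/\partial\nu=0$ by the Neumann condition in \eqref{eqInt.1}, so both sides have vanishing normal derivative on $\partial\Omega$. The norm bounds follow by standard change of variables, using the boundedness of $\Phi,D\Phi,\chi,\nabla\chi$ on $\Omega_O$.

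The substance of the proof is part \textup{(ii)}. The naive estimate $|\Phi(x)-P_\varepsilon|\ge|x-P_\varepsilon|-2\operatorname{dist}(x,\partial\Omega)$ costs a multiplicative factor $e^{2c_4\omega(\Omega)/\varepsilon}$ which is fatal as $\varepsilon\to 0^+$, so it must be sharpened using the smoothness of $\partial\Omega$. Writing $x=\hat x+y\nu_{\hat x}\in\Omega_O$ with $\hat x\in\partial\Omega$ and $y\in(0,\omega(\Omega)]$, a direct expansion yields
\[
|\Phi(x)-P_\varepsilon|^2=|x-P_\varepsilon|^2-4y\bigl\langle\nu_{\hat x},\hat x-P_\varepsilon\bigr\rangle.
\]
Combining the curvature-type bound $|\langle\nu_{\hat x},\hat x-Q\rangle|\le C_0|\hat x-Q|^2$ valid for $Q\in\partial\Omega$ near $\hat x$ (a consequence of $C^2$-smoothness of $\partial\Omega$) with $\operatorname{dist}(P_\varepsilon,\partial\Omega)=o(\varepsilon)$ from Theorem \ref{ThmA} and $|\hat x-P_\varepsilon|\le y+|x-P_\varepsilon|$, and finally shrinking $\omega(\Omega)$ so that $16 C_0 \omega(\Omega)\le\tfrac12$, gives
\[
|\Phi(x)-P_\varepsilon|\ge\frac{|x-P_\varepsilon|}{\sqrt 2}-o(\varepsilon)
\]
for $\varepsilon$ sufficiently small. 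Substituting this into the pointwise bound of Lemma \ref{LemSom.2} applied at $\Phi(x)$, and absorbing the $o(\varepsilon)$-correction into a multiplicative constant, produces the decay in \textup{(ii)} with an absolute $\lambda$ depending only on $\Omega$; in the small range $|x-P_\varepsilon|=O(\varepsilon)$ the trivial bound $u_\varepsilon(\Phi(x))\le c_3$ is already dominated by $c_3\lambda\exp(-c_4|x-P_\varepsilon|/(\lambda\varepsilon))$ once $\lambda$ is chosen large enough. The gradient estimate is proved identically, bounding $|\nabla(\chi\,u_\varepsilon\circ\Phi)|$ through $|D\Phi|$, $|\nabla u_\varepsilon(\Phi(x))|$ and $|\nabla\chi|$; the $\nabla\chi$-contribution is supported away from $P_\varepsilon$ and hence automatically exponentially small. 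The \emph{main obstacle} is precisely this quasi-isometric step: without the curvature-based refinement above, the reflection could distort distances to $P_\varepsilon$ by an amount that is $O(1)$ but $\omega(\varepsilon)$ on the exponential scale, killing any absolute $\lambda$. Everything else reduces to standard chain-rule and change-of-variables bookkeeping once the tubular-neighborhood framework of Lemma \ref{LemSom.2} is in place.
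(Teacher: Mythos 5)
Your construction coincides with the paper's: there, $\tilde u_\varepsilon=\varrho\,\bar u_\varepsilon$, where $\bar u_\varepsilon$ is precisely the reflected extension $u_\varepsilon\circ\Phi$ on the outer collar $\Omega_O$ already built in the proof of Lemma \ref{LemSom.2}, and $\varrho$ is a cutoff equal to $1$ on a fixed neighborhood of $\overline\Omega$; the paper then simply declares that (i)--(iii) hold ``automatically''. So the difference is not the route but the level of justification: you correctly isolate the one nontrivial point, namely that reflection does not ruin the exponential decay centered at $P_\varepsilon$, and your curvature argument based on $|\Phi(x)-P_\varepsilon|^2=|x-P_\varepsilon|^2-4y\langle\nu_{\hat x},\hat x-P_\varepsilon\rangle$, the second-fundamental-form bound $|\langle\nu_{\hat x},\hat x-Q\rangle|\le C_0|\hat x-Q|^2$ and $d(P_\varepsilon,\partial\Omega)=o(\varepsilon)$ does close it. One bookkeeping caution: to end up with an additive error $o(\varepsilon)$ (and not $o(\sqrt\varepsilon)$, which would be fatal on the exponential scale) after taking square roots, you must estimate the term $4y\cdot o(\varepsilon)$ by $4|x-P_\varepsilon|\cdot o(\varepsilon)$ using $y\le|x-P_\varepsilon|$, rather than by $4\omega(\Omega)\,o(\varepsilon)$, and then complete the square. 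It is also worth recording that the curvature refinement, though correct, is more than is needed: since $d(\cdot,\partial\Omega)$ is $1$-Lipschitz one has the two elementary lower bounds $|\Phi(x)-P_\varepsilon|\ge d(\Phi(x),\partial\Omega)-d(P_\varepsilon,\partial\Omega)=y-o(\varepsilon)$ and $|\Phi(x)-P_\varepsilon|\ge|x-P_\varepsilon|-2y$, and their maximum is always at least $\tfrac13|x-P_\varepsilon|-o(\varepsilon)$ (split according to whether $y\ge\tfrac13|x-P_\varepsilon|$ or not); this yields (ii) with $\lambda$ an absolute constant and requires no shrinking of the collar. Your treatment of (i) and (iii) — matching of tangential derivatives plus the Neumann condition for $C^1$-gluing, change of variables for the norm bounds, and choosing $\tilde\delta$ inside the region where the cutoff is identically $1$ — is exactly what the paper's construction delivers.
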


\begin{proof}
Let $\tilde d=d\left( \partial\Omega, \partial\Omega^{\tilde\gamma}\right) $
and $0\leq\varrho(x)\leq1$ be a smooth cut-off function such that $%
\varrho(x)\equiv1$ for $x\in\{x\in\mathbb{R}^{N}, d(x,\Omega )\leq\frac{%
\tilde d}{2}\}$ and $\varrho(x)\equiv0$ for $x\in\mathbb{R}%
^{N}\setminus\left( \overline{\Omega}\bigcup\Omega_{O}\right) .$ Then $%
\tilde u_{\varepsilon}=\varrho\bar u_{\varepsilon}$ satisfies (\ref%
{LemSom.4(1)}), (\ref{LemSom.4(2)}) and (\ref{LemSom.4(3)}) automatically.
The proof of this lemma is completed.
\end{proof}

Similar to energy density introduced in \cite{DeFe99}, we define the energy
density associated with (\ref{eqInt.1}) as follows: 
\begin{equation*}
E\left( w,y^{\prime}\right) =\left[ \frac{1}{m}\left( \left\vert \nabla
w\right\vert ^{m}+w^{m}\right) -F\left( w\right) \right] \left(
y^{\prime},0\right) \text{\quad for }y^{\prime}\in\mathbb{R}^{N-1}.
\end{equation*}
Then we have the following lemma.

\begin{lemma}
\label{LemSom.3}Let $G$ be a $C^{2}$ function in a neighborhood of the
origin of $\mathbb{R}^{N-1}$. Then%
\begin{equation*}
\sum_{i,j=1}^{N-1}\int_{\mathbb{R}^{N-1}}G_{ij}\left( 0\right)
y_{i}y_{j}E\left( w,y^{\prime}\right) \,dy^{\prime}=2\Delta G\left( 0\right)
\gamma,
\end{equation*}
where $\gamma$ is the constant defined in \textup{(\ref{eqInt.13})}, and $%
y^{\prime}=\left( y_{1},\dots,y_{N-1}\right) $, and 
\begin{equation*}
G_{ij}\left( 0\right) =\frac{\partial^{2}G}{\partial y_{i}\partial y_{j}}%
\left( 0\right) .
\end{equation*}
\end{lemma}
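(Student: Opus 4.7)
The plan is to exploit the radial symmetry of $w$ to reduce the claim to a one-variable Pohozaev-type identity for the radial $m$-Laplacian equation. By Proposition \ref{IntroProp1}, $w(y',0) = w(|y'|)$ and $|\nabla w|(y',0) = |w'(|y'|)|$, so $E(w,y')$ depends only on $r := |y'|$; write $E(w,y') = e(r)$ with
\[ e(r) = \frac{1}{m}\bigl(|w'(r)|^m + w(r)^m\bigr) - F(w(r)). \]

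First I would simplify the left-hand side. By oddness in a single coordinate, all cross terms $(i \neq j)$ in $\sum_{i,j} G_{ij}(0) y_i y_j\, e(|y'|)$ integrate to zero. By rotational invariance in $\mathbb{R}^{N-1}$, each diagonal integral equals
\[ \int_{\mathbb{R}^{N-1}} y_i^2\, e(|y'|)\,dy' = \frac{1}{N-1}\int_{\mathbb{R}^{N-1}} |y'|^2 e(|y'|)\,dy' = \frac{|S^{N-2}|}{N-1}\int_0^\infty r^N e(r)\,dr. \]
Hence the LHS of the lemma equals $\tfrac{|S^{N-2}|\Delta G(0)}{N-1}\int_0^\infty r^N e(r)\,dr$. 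On the other side, writing $z = r\omega$ in spherical coordinates on $\mathbb{R}^{N}_+$ and using $\int_{S^{N-1}_+} \omega_N\,d\sigma = |S^{N-2}|/(N-1)$, one obtains $\gamma = \tfrac{|S^{N-2}|}{(N+1)(N-1)}\int_0^\infty r^N |w'(r)|^m\,dr$. Matching, the lemma becomes equivalent to the one-dimensional identity
\[ (N+1)\int_0^\infty r^N e(r)\,dr = 2\int_0^\infty r^N |w'(r)|^m\,dr. \qquad (\ast) \]

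I would establish $(\ast)$ as a Pohozaev-type relation by multiplying the radial form of \eqref{IntroEq5},
\[ (|w'|^{m-2}w')'(r) + \tfrac{N-1}{r}|w'|^{m-2}w'(r) = w(r)^{m-1} - f(w(r)), \]
by $r^{N+1} w'(r)$ and integrating over $(0,\infty)$. Using the identities $w' w^{m-1} = (w^m)'/m$, $w' f(w) = (F(w))'$, and $|w'|^{m-2} w' w'' = (|w'|^m)'/m$, every term becomes $\int r^N(\cdot)\,dr$ after one integration by parts. Boundary terms at $r=0$ vanish because $w'(0)=0$, and at $r=\infty$ by the exponential decay of $w$ and $w'$ supplied by Proposition \ref{IntroProp1}. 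Collecting coefficients and simplifying $m(N-1)-(N+1)(m-1) = N+1-2m$ yields
\[ \frac{N+1-2m}{m}\int_0^\infty r^N |w'|^m\,dr + (N+1)\int_0^\infty r^N \Bigl[\frac{w^m}{m} - F(w)\Bigr]\,dr = 0, \]
and adding $\frac{N+1}{m}\int r^N |w'|^m\,dr$ to both sides gives exactly $(\ast)$.

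The main technical obstacle is the integration by parts on the $m$-Laplacian term when $m > 2$, since $|w'|^{m-2}$ is only Hölder continuous near the critical points of $w$. However, the regularity issue is only superficial: since $|w'|^m = ((w')^2)^{m/2}$ is $C^1$ in $r$ on $(0,\infty)$ and the endpoint $r=0$ is harmless (as $w'(0)=0$), the identity $(|w'|^m)' = m|w'|^{m-2}w'w''$ holds in the appropriate sense and can be justified by a standard truncation-and-approximation argument, so the Pohozaev computation goes through cleanly.
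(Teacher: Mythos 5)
Your proof is correct, and its overall architecture coincides with the paper's: both arguments kill the off-diagonal terms by oddness, use rotational invariance to replace $\sum_i G_{ii}(0)\int y_i^2 E\,dy'$ by $\frac{\Delta G(0)}{N-1}\int |y'|^2 E\,dy'$, pass to polar coordinates, and then match the resulting one-dimensional integral $\int_0^\infty r^N E(w,r)\,dr$ against $\gamma$. The one substantive difference is how the key identity is handled: the paper simply cites Lemma 2.4 of \cite{LiZh04c} for the relation $\gamma=\frac12\int_{\mathbb{R}^N_+}E(w,z)\,z_N\,dz$ (equation (\ref{eqSom.19})), which after passing to polar coordinates is exactly your identity $(\ast)$, whereas you rederive it from scratch as a Pohozaev-type identity by multiplying the radial equation by $r^{N+1}w'$ and integrating by parts. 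I checked your constants: $\int_{S^{N-1}_+}\omega_N\,d\sigma=|S^{N-2}|/(N-1)$ and the coefficient bookkeeping $-\frac{(m-1)(N+1)}{m}+(N-1)=\frac{N+1-2m}{m}$ are both right, and the final rearrangement does yield $(N+1)\int r^N e=2\int r^N|w'|^m$ (your phrase ``adding to both sides'' presupposes first moving the $|w'|^m$ term across, but the algebra is sound). Your treatment of the boundary terms is also fine, since $w'(0)=0$ and Proposition \ref{IntroProp1}(ii) gives exponential decay of both $w$ and $w'$; and the regularity worry for $m>2$ is indeed harmless because $(|w'|^{m-2}w')'$ exists pointwise from the equation itself away from $r=0$. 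So your version buys self-containedness at the cost of reproving a result the paper imports; otherwise the two proofs are the same.
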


\begin{proof}
In Lemma 2.4 of \cite{LiZh04c}, we showed that%
\begin{equation}
\gamma=\frac{1}{2}\int_{\mathbb{R}_{+}^{N}}\left( \frac{1}{m}\left(
\left\vert \nabla w\right\vert ^{m}\right) +w^{m}-F(w) \right) z_{N}\,dz.
\label{eqSom.19}
\end{equation}
Next we introduce the polar coordinates%
\begin{equation*}
\left\{ \begin{aligned}
z_{1}&=r\sin\theta_{N-1}\sin\theta_{N-2}\cdots\sin\theta_{2}\sin
\theta_{1},\\
z_{2}&=r\sin\theta_{N-1}\sin\theta_{N-2}\cdots\sin\theta_{2}\cos\theta_{1},%
\\ z_{3}&=r\sin\theta_{N-1}\sin\theta_{N-2}\cdots\cos\theta _{2},\\
&\vdots\;,\\ z_{N}&=r\cos\theta_{N-1}, \end{aligned} \right.
\end{equation*}
and notice that%
\begin{multline*}
\mathbb{R}_{+}^{N}=\left\{ \,\left( r,\theta_{1},\dots,\theta_{N-1}\right)
\mid r>0,\;0\leq\theta_{1}<2\pi ,\vphantom{\;0\leq\theta_{j}<\pi\text{ for
}j=2,\dots,N-2\text{, and }0\leq\theta_{N-1}<\frac{\pi}{2}\,}\right. \\
\left. \vphantom{\,\left( r,\theta_{1},\dots,\theta_{N-1}\right) \mid
r>0,\;0\leq\theta_{1}<2\pi,\;}0\leq \theta_{j}<\pi\text{ for }j=2,\dots,N-2%
\text{, and }0\leq\theta_{N-1}<\frac{\pi}{2}\,\right\}
\end{multline*}
and that%
\begin{equation*}
dz=r^{N-1}\sin\theta_{2}\sin^{2}\theta_{3}\cdots\sin^{N-2}\theta
_{N-1}\,dr\,d\theta_{1}\cdots d\theta_{N-1}.
\end{equation*}
After elementary computations one obtains%
\begin{equation}
\gamma=\frac{1}{2}\int_{0}^{\infty}\left( \frac{1}{m}\left( \left\vert
w^{\prime}\left( r\right) \right\vert ^{m}+w^{m}\left( r\right) \right)
-F\left( w\left( r\right) \right) \right) r^{N}\,dr\cdot\omega_{N-2},
\label{eqSom.20}
\end{equation}
where $\omega_{N-2}$ is the volume of the unit ball in $\mathbb{R}^{N-2}$.
Here we used the fact that $w$ is radially symmetric.

Using the radial symmetry of $w$ again, we obtain%
\begin{align}
& \sum_{i,j=1}^{N-1}\int_{\mathbb{R}^{N-1}}G_{ij}\left( 0\right)
y_{i}y_{j}E\left( w,y^{\prime}\right) \,dy^{\prime}  \label{eqSom.21} \\
& \qquad=\sum_{i=1}^{N-1}\int_{\mathbb{R}^{N-1}}G_{ii}\left( 0\right)
y_{i}^{2}E\left( w,y^{\prime}\right) \,dy^{\prime}  \notag \\
& \qquad=\sum_{i=1}^{N}G_{ii}\left( 0\right) \cdot\frac{1}{N-1}\int_{\mathbb{%
R}^{N-1}}\left\vert y^{\prime}\right\vert ^{2}E\left( w,y^{\prime}\right)
\,dy^{\prime}  \notag \\
& \qquad=\Delta G\left( 0\right) \cdot\int_{0}^{\infty}E\left( w,r\right)
r^{N}\,dr\cdot\omega_{N-2},  \notag
\end{align}
where $E\left( w,r\right) =\left( 1/m\right) \left( \left\vert w^{\prime
}\left( r\right) \right\vert ^{m}+w^{m}\left( r\right) \right) -F\left(
w\left( r\right) \right) \rule{0pt}{18pt}$. Comparing (\ref{eqSom.20}) and (%
\ref{eqSom.21}) yields%
\begin{equation*}
\sum_{i,j=1}^{N-1}\int_{\mathbb{R}^{N-1}}G_{ij}\left( 0\right)
y_{i}y_{j}E\left( w,y^{\prime}\right) \,dy^{\prime}=2\Delta G\left( 0\right)
\gamma.
\end{equation*}
The proof of Lemma \ref{LemSom.3} is completed.
\end{proof}

\section{\label{ProB}Proof of Theorem \protect\ref{ThmB}}

With the help of the lemmas in Section \ref{Som}, now we can give the proof
of Theorem \ref{ThmB}.

\begin{proof}[Proof of Theorem \textup{\protect\ref{ThmB}}]
Since as $\varepsilon\rightarrow 0^+$, $P_{\varepsilon}\rightarrow\partial%
\Omega$ at the rate of $o(\varepsilon)$, it follows that $%
d(P_{\varepsilon},\tilde P_{\varepsilon})/\varepsilon\rightarrow 0$, where $%
\tilde P_{\varepsilon}\in\partial\Omega$ is the closest point on $%
\partial\Omega$ to $P_{\varepsilon}$. then by passing to a sequence, $\tilde
P_{\varepsilon}\rightarrow\bar{P}\in \partial\Omega$. After an $\varepsilon$%
-dependent rotation and translation, we may assume that $\tilde
P_{\varepsilon}$ is at the origin and $\Omega$ can be described in a fixed
cubic neighborhood $V$ of $\bar{P}$ as the set%
\begin{equation*}
\left\{ \,\left( x^{\prime},x_{N}\right) \mid x_{N}>\psi_{\varepsilon
}\left( x^{\prime}\right) \,\right\} \text{\qquad with }x^{\prime}=\left(
x_{1},\dots,x_{N-1}\right) ,
\end{equation*}
where $\psi_{\varepsilon}$ is smooth, $\psi_{\varepsilon}\left( 0\right) =0$%
, $\nabla\psi_{\varepsilon}\left( 0\right) =0$. Furthermore, we may assume
that $\psi_{\varepsilon}$ converges locally in the $C^{2}$ sense to $\psi$,
a corresponding parametrization at $\bar{P}$. Note that since $\tilde
P_{\varepsilon}$ is the origin, so we have $P_{\varepsilon}/\varepsilon%
\rightarrow 0$ as $\varepsilon\rightarrow 0^+.$ Thus we have $\tilde
u^{\varepsilon}(x)=\tilde u_{\varepsilon}(\varepsilon x)=$ $\tilde
u_{\varepsilon}\left(\varepsilon\left(x-\frac{P_{\varepsilon}}{\varepsilon}%
\right)+P_{\varepsilon}\right)\rightarrow w(x)$ in $C^{1}_{loc}\left(\mathbb{%
R}^N\right)$ as $\varepsilon\rightarrow 0^+$. From the characterization of $%
c_{\varepsilon}=J_{\varepsilon}\left( u_{\varepsilon }\right) $ in Section %
\ref{Int}, we have%
\begin{equation*}
\varepsilon^{-N}J_{\varepsilon}\left( u_{\varepsilon}\right) \geq
\varepsilon^{-N}J_{\varepsilon}\left( tu_{\varepsilon}\right) =I_{\Omega
_{\varepsilon}}\left( tu^{\varepsilon}\right)
\end{equation*}
for all $t>0$. Hereinafter%
\begin{equation*}
I_{\Omega_{\ast}}\left( v\right) =\frac{1}{m}\int_{\Omega_{\ast}}\left(
\left\vert \nabla v\right\vert ^{m}+\left\vert v\right\vert ^{m}\right)
\,dx-\int_{\Omega_{\ast}}F\left( v\right) \,dx.
\end{equation*}
Then%
\begin{align}
I_{\Omega_{\varepsilon}}\left( tu^{\varepsilon}\right) & =I_{\Omega
_{\varepsilon}}\left( t\tilde{u}^{\varepsilon}\right) \geq I_{\mathbb{R}%
_{+}^{N}\cap V_{\varepsilon}}\left( t\tilde{u}^{\varepsilon}\right)
+I_{\left( \Omega_{\varepsilon}\cap V_{\varepsilon}\right) \setminus \mathbb{%
R}_{+}^{N}}\left( t\tilde{u}^{\varepsilon}\right) -I_{\left( \mathbb{R}%
_{+}^{N}\cap V_{\varepsilon}\right) \setminus\Omega_{\varepsilon}}\left( t%
\tilde{u}^{\varepsilon}\right)  \label{eqProB.1} \\
& =\mathrm{I}+\mathrm{II}-\mathrm{III},  \notag
\end{align}
with $V_{\varepsilon}=\frac{1}{\varepsilon}V.$ Let us choose $%
t=t_{\varepsilon}$ so that $I_{\mathbb{R}_{+}^{N}}\left( t\tilde{u}%
^{\varepsilon}\right) $ maximizes in $t$. Then from the definition of $%
C_{\ast}$ in (\ref{eqInt.10}), equality (\ref{eqInt.11}) and Lemma \ \ref%
{LemSom.4} it follows that 
\begin{equation*}
\mathrm{I}=I_{\mathbb{R}_{+}^{N}\cap V_{\varepsilon}}\left( t_{\varepsilon }%
\tilde{u}^{\varepsilon}\right) \geq\frac{c_{\ast}}{2}+O\left(
e^{-c_{6}/\varepsilon}\right)
\end{equation*}
for some constant $c_{6}>0$ independent of $\varepsilon$. Next we give an
estimate of $t_{\varepsilon}.$

\begin{lemma}
\label{LemSom.5}There is a unique $t_{\varepsilon}\in\left( 0,\infty\right) $
such that%
\begin{multline*}
\frac{1}{m}\int_{\mathbb{R}_{+}^{N}}t_{\varepsilon}^{m}\left( \left\vert
\nabla\tilde{u}^{\varepsilon}\right\vert ^{m}+(\tilde{u}^{\varepsilon})^{m}%
\right) \,dx-\int_{\mathbb{R}_{+}^{N}}F\left( t_{\varepsilon}\tilde {u}%
^{\varepsilon}\right) \,dx \\
=\sup_{t\geq0}\left[ \frac{1}{m}\int_{\mathbb{R}_{+}^{N}}t^{m}\left(
\left\vert \nabla\tilde{u}^{\varepsilon}\right\vert ^{m}+(\tilde {u}%
^{\varepsilon})^{m}\right) \,dx-\int_{\mathbb{R}_{+}^{N}}F\left( t\tilde{u}%
^{\varepsilon}\right) \,dx\right] ,
\end{multline*}
and moreover%
\begin{equation}
t_{\varepsilon}=1+o\left( 1\right) \text{\quad as }\varepsilon
\rightarrow0^{+}.  \label{eqSom.23}
\end{equation}
\end{lemma}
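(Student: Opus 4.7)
The plan is to first establish existence and uniqueness of the positive maximizer of
\[
\phi_{\varepsilon}(t):=\frac{t^{m}}{m}\int_{\mathbb{R}^{N}_{+}}\bigl(|\nabla\tilde{u}^{\varepsilon}|^{m}+(\tilde{u}^{\varepsilon})^{m}\bigr)\,dx-\int_{\mathbb{R}^{N}_{+}}F(t\tilde{u}^{\varepsilon})\,dx,
\]
and then to identify the limit by passing to the limit in the critical-point equation and invoking the strict monotonicity (H$_{4}$). For the first part, differentiate and factor out $t^{m-1}$ to write
\[
t^{-(m-1)}\phi_{\varepsilon}'(t)=\int_{\mathbb{R}^{N}_{+}}\bigl(|\nabla\tilde{u}^{\varepsilon}|^{m}+(\tilde{u}^{\varepsilon})^{m}\bigr)\,dx-\int_{\mathbb{R}^{N}_{+}}\frac{f(t\tilde{u}^{\varepsilon})}{(t\tilde{u}^{\varepsilon})^{m-1}}(\tilde{u}^{\varepsilon})^{m}\,dx.
\]
By (H$_{4}$) the map $s\mapsto f(s)/s^{m-1}$ is strictly increasing on $(0,\infty)$, so the second integral is strictly increasing in $t$, and $\phi_{\varepsilon}'$ has at most one zero. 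The growth bound $f(s)=O(s^{m-1+\delta})$ near $0$ in (H$_{4}$) makes $\phi_{\varepsilon}'(t)>0$ for small $t$, while (H$_{3}$) forces $f(s)/s^{m-1}\to\infty$ as $s\to\infty$ (Ambrosetti--Rabinowitz gives $F(s)\ge c s^{\mu}$ with $\mu>m$, hence $f(s)\ge (\mu/s)F(s)\ge c' s^{\mu-1}$), so $\phi_{\varepsilon}'(t)<0$ for large $t$. Combining these yields a unique $t_{\varepsilon}\in(0,\infty)$ at which $\phi_{\varepsilon}$ achieves its maximum.

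Next I would show that $\{t_{\varepsilon}\}$ stays in a compact subinterval of $(0,\infty)$. Along any subsequence with $t_{\varepsilon_{k}}\to 0$, (H$_{4}$) yields $f(t_{\varepsilon_{k}}\tilde{u}^{\varepsilon_{k}})/(t_{\varepsilon_{k}}\tilde{u}^{\varepsilon_{k}})^{m-1}=O((t_{\varepsilon_{k}}\tilde{u}^{\varepsilon_{k}})^{\delta})=o(1)$ uniformly in $x$, so using the exponential decay of $\tilde{u}^{\varepsilon_{k}}$ from Lemma \ref{LemSom.4} as an integrable majorant for $(\tilde{u}^{\varepsilon_{k}})^{m+\delta}$, the right-hand side of the critical-point equation tends to $0$, contradicting the strictly positive limit of the left-hand side (which converges to $\int_{\mathbb{R}^{N}_{+}}(|\nabla w|^{m}+w^{m})\,dx$ by $C^{1}_{\text{loc}}$ convergence $\tilde{u}^{\varepsilon}\to w$ combined with the dominator from Lemma \ref{LemSom.4}). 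If instead $t_{\varepsilon_{k}}\to\infty$, then on any compact region where $w>0$ one has $t_{\varepsilon_{k}}\tilde{u}^{\varepsilon_{k}}\to\infty$, so by (H$_{3}$) the integrand $\frac{f(t_{\varepsilon_{k}}\tilde{u}^{\varepsilon_{k}})}{(t_{\varepsilon_{k}}\tilde{u}^{\varepsilon_{k}})^{m-1}}(\tilde{u}^{\varepsilon_{k}})^{m}$ blows up on a set of positive measure, and Fatou's lemma produces a contradiction with the bounded left-hand side.

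With $\{t_{\varepsilon}\}$ relatively compact in $(0,\infty)$, extract any subsequence $t_{\varepsilon}\to t_{0}\in(0,\infty)$ and pass to the limit in the critical-point equation via dominated convergence, using the growth bound $f(s)\lesssim s^{m-1}+s^{p}$ from (H$_{2}$) together with the uniform exponential decay of Lemma \ref{LemSom.4} to provide an integrable dominator valid on the whole range of $t$ under consideration; this yields
\[
\int_{\mathbb{R}^{N}_{+}}\bigl(|\nabla w|^{m}+w^{m}\bigr)\,dx=\int_{\mathbb{R}^{N}_{+}}\frac{f(t_{0}w)}{(t_{0}w)^{m-1}}\,w^{m}\,dx.
\]
Since $w$ is radial and thus $\partial w/\partial x_{N}=0$ on $\{x_{N}=0\}$, testing (\ref{eqInt.9}) against $w$ gives the same identity with $t_{0}$ replaced by $1$. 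The strict monotonicity of $f(s)/s^{m-1}$ from (H$_{4}$) then forces $t_{0}=1$; as every subsequential limit equals $1$, the entire family satisfies $t_{\varepsilon}=1+o(1)$. The main obstacle is assembling a single integrable majorant, uniform in $\varepsilon$ and in $t$ over a compact subinterval of $(0,\infty)$, so that dominated convergence applies to the nonlinear term; this is precisely where the uniform exponential decay from Lemma \ref{LemSom.4} becomes essential.
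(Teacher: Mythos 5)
Your proof is correct and follows essentially the same route as the paper: differentiate the fibering map, use the Nehari-type identity $\int_{\mathbb{R}^N_+}(|\nabla w|^m+w^m)=\int_{\mathbb{R}^N_+}f(w)w$ together with the strict monotonicity of $f(s)/s^{m-1}$ from (H$_4$) to force the limit of $t_\varepsilon$ to be $1$. The only cosmetic differences are that you prove existence/uniqueness directly (the paper cites Lemma 2.1 of \cite{LiZh04c}) and you obtain boundedness of $t_\varepsilon$ away from $0$ and $\infty$ from the critical-point equation and Fatou, whereas the paper reads it off from $h_\varepsilon(1)=\tfrac12 c_*+o(1)$; both are sound.
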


\begin{proof}
Under assumption (\ref{hypothesis4}), the existence and uniqueness of $%
t_{\varepsilon}$ can be proved similarly to the proof of Lemma 2.1 of \cite%
{LiZh04c}. Here we only need show (\ref{eqSom.23}). Let 
\begin{equation}
h_{\varepsilon}\left( t\right) =\frac{t^{m}}{m}\int_{\mathbb{R}%
_{+}^{N}}\left( \left\vert \nabla\tilde{u}^{\varepsilon}\right\vert ^{m}+(%
\tilde {u}^{\varepsilon})^{m}\right) \,dx-\int_{\mathbb{R}_{+}^{N}}F\left( t%
\tilde{u}^{\varepsilon}\right) \,dx.  \label{qqq}
\end{equation}
Then%
\begin{equation}
\begin{aligned} h_{\varepsilon}^{\prime}\left( t\right) & =
t^{m-1}\int_{\mathbb{R}_{+}^{N}}\left( \left\vert \nabla
\tilde{u}^{\varepsilon}\right\vert ^{m}
+(\tilde{u}^{\varepsilon})^{m}\right) \,dx
-\int_{\mathbb{R}_{+}^{N}}\tilde{u}^{\varepsilon}f\left(
t\tilde{u}^{\varepsilon}\right) \,dx\\ &
=t^{m-1}\int_{\mathbb{R}_{+}^{N}}\left( \left\vert \nabla w\right\vert ^{m}
+w^{m}\right) \,dx -\int_{\mathbb{R}_{+}^{N}}wf\left( tw\right) \,dx+o(1),
\end{aligned}  \label{eqSom.24}
\end{equation}
here we have used the exponential decay of $\tilde{u}_{\varepsilon}$ in
Lemma \ref{LemSom.4}, exponential decay of $w$ and $\tilde{u}^{\varepsilon
}\rightarrow w$ in $C_{\limfunc{loc}}^{1}\left( \mathbb{R}^{N}\right) $ as $%
\varepsilon\rightarrow0^{+}$. Moreover the term $o\left( 1\right)
\rightarrow0$ uniformly in $t$ on each compact interval as $\varepsilon
\rightarrow0^{+}$. (\ref{qqq}) tells us $h_{\varepsilon}(1)=\frac{1}{2}%
c_{\ast}+o(1)$, which yields that $t_{\varepsilon}$ is bounded and away from 
$0$. Also from (\ref{eqSom.24}) it follows that 
\begin{equation}
\begin{aligned} h_{\varepsilon}^{\prime}\left(t\right)&=t^{m-1}
\int_{\mathbb{R}_+^{N}}wf\left(w\right)\,dx
-\int_{\mathbb{R}_+^{N}}w\,f\left(tw\right) \,dx +o\left( 1\right)\\
&=t^{m-1}\int_{\mathbb{R}_+^N}w^m\left(\frac{f\left(w\right)}{w^{m-1}}-
\frac{f\left(tw\right)}{\left(tw\right)^{m-1}}\right)\,dx+o\left( 1\right).
\end{aligned}  \label{eqSom.26}
\end{equation}
Therefore at $t=t_{\varepsilon}$ we have 
\begin{equation}
\int_{\mathbb{R}_{+}^{N}}w^{m}\left( \frac{f\left( w\right) }{w^{m-1}}-\frac{%
f\left( t_{\varepsilon}w\right) }{\left( t_{\varepsilon}w\right) ^{m-1}}%
\right) \,dx=o\left( 1\right) .  \label{eqSom.27}
\end{equation}
Since $f(t)/t^{m-1}$ is strictly increasing (see (\ref{hypothesis4})) it
follows from (\ref{eqSom.27}) that $t_{\varepsilon}=1+o\left( 1\right) .$
The proof of Lemma \ref{LemSom.5} is completed.
\end{proof}

\textit{Proof of Theorem \textup{\ref{ThmB}} continued}. Using again the
exponential decay of $\ u_{\varepsilon}$ in Lemma \ref{LemSom.2} and the
expansion of $t_{\varepsilon}$ in Lemma \ref{LemSom.5}, we obtain%
\begin{align}
-\mathrm{II} & =-\int_{\left( \mathbb{R}^{N-1}\times\left\{ 0\right\}
\right) \cap V_{\varepsilon}}dy^{\prime}  \label{eqProB.2} \\
& \qquad\quad{}\cdot\int_{\frac{\left( \psi_{\varepsilon}\left( \varepsilon
y^{\prime}\right) \right) _{-}}{\varepsilon}}^{0}\left[ \frac{1}{m}%
t_{\varepsilon}^{m}\left( \left\vert \nabla\tilde{u}^{\varepsilon
}\right\vert ^{m}+\left( \tilde{u}^{\varepsilon}\right) ^{m}\right) -F\left(
t_{\varepsilon}\tilde{u}^{\varepsilon}\right) \right] \left(
y^{\prime},y_{N}\right) \,dy_{N}  \notag \\
& =-\left( 1+o\left( 1\right) \right) \int_{\left( \mathbb{R}%
^{N-1}\times\left\{ 0\right\} \right) \cap\left( \Omega_{\varepsilon}\cap
V_{\varepsilon}\right) }dy^{\prime}  \notag \\
& \qquad\quad{}\cdot\int_{\frac{\left( \psi_{\varepsilon}\left( \varepsilon
y^{\prime}\right) \right) _{-}}{\varepsilon}}^{0}\left[ \frac{1}{m}\left(
\left\vert \nabla u^{\varepsilon}\right\vert ^{m}+\left( u^{\varepsilon
}\right) ^{m}\right) -F\left( u^{\varepsilon}\right) \right] \left(
y^{\prime},y_{N}\right) \,dy_{N}.  \notag
\end{align}
Similarly, 
\begin{multline}
\mathrm{III}=\left( 1+o\left( 1\right) \right)
\int_{V_{\varepsilon}\cap\left( \mathbb{R}^{N-1}\times\left\{ 0\right\}
\right) }dy^{\prime} \\
{}\cdot\int_{0}^{\frac{\left( \psi_{\varepsilon}\left( \varepsilon
y^{\prime}\right) \right) _{+}}{\varepsilon}}\left[ \frac{1}{m}\left(
\left\vert \nabla\tilde{u}^{\varepsilon}\right\vert ^{m}+\left( \tilde {u}%
^{\varepsilon}\right) ^{m}\right) -F\left( \tilde{u}^{\varepsilon }\right) %
\right] \left( y^{\prime},y_{N}\right) \,dy_{N}.  \label{eqProB.3}
\end{multline}
In \ above $a_{+}=\max\{a,0\},$ $a_{-}=\min\{a,0\}.$\ Since $\psi
_{\varepsilon}\left( 0\right) =0$, $\nabla\psi_{\varepsilon}\left( 0\right)
=0$ and $\psi_{\varepsilon}$ converges in the $C^{2}$ local sense to $\psi$,
and $\tilde{u}^{\varepsilon}\rightarrow w$ in the $C^{1}$ local sense in $%
\mathbb{R}^{N}$ with uniform exponential decay with respect to $\varepsilon$%
, it follows from the dominated convergence theorem that 
\begin{align*}
& \lim_{\varepsilon\rightarrow0^{+}}\frac{1}{\varepsilon}\left( -\mathrm{II}+%
\mathrm{III}\right) \\
& \qquad=\frac{1}{2}\sum_{i,j=1}^{N=1}\int_{\mathbb{R}^{N-1}}\psi_{ij}\left(
0\right) y_{i}y_{j}\left( \frac{1}{m}\left( \left\vert \nabla w\right\vert
^{m}+w^{m}\right) -F\left( w\right) \right) \left( y^{\prime},0\right)
\,dy^{\prime} \\
& \qquad=\Delta\psi\left( 0\right) \gamma=\left( N-1\right) H\left( \bar{P}%
\right) \gamma\text{\qquad(by Lemma \ref{LemSom.3}).}
\end{align*}
Thus we have%
\begin{equation*}
c_{\varepsilon}\geq\varepsilon^{N}\left\{ \frac{1}{2}c_{\ast}-\left(
N-1\right) H\left( \bar{P}\right) \gamma\varepsilon+o\left( \varepsilon
\right) \right\} .
\end{equation*}
But (\ref{eqInt.12}) in Theorem \ref{ThmA} tells us%
\begin{equation*}
c_{\varepsilon}\leq\varepsilon^{N}\left\{ \frac{1}{2}c_{\ast}-\left(
N-1\right) \max_{P\in\partial\Omega}H\left( P\right) \gamma\varepsilon
+o\left( \varepsilon\right) \right\} .
\end{equation*}
Therefore we get

\begin{enumerate}
\item \renewcommand{\theenumi}{\roman{enumi}}

\item \label{ThmBproof(1)} $\displaystyle H\left( \bar{P}\right) =\max
_{P\in\partial\Omega}H\left( P\right) $, which is (\ref{ThmB(1)}) of Theorem %
\ref{ThmB},
\end{enumerate}

\noindent\setcounter{saveenumi}{\value{enumi}}and

\begin{enumerate}
\item \setcounter{enumi}{\value{saveenumi}}\renewcommand{\theenumi}{%
\roman{enumi}}

\item \label{ThmBproof(2)} $\displaystyle
c_{\varepsilon}=\varepsilon^{N}\left\{ \frac{1}{2}c_{\ast}-\left( N-1\right)
H\left( \bar{P}\right) \gamma\varepsilon+o\left( \varepsilon \right)
\right\} $ as $\varepsilon\rightarrow0^{+}$,
\end{enumerate}

\noindent which is part (\ref{ThmB(2)}) of Theorem \ref{ThmB}. The proof of
Theorem \ref{ThmB} is completed.
\end{proof}

\textbf{Acknowledgement.} The authors want to give their thanks to anonymous
referee for some helpful comments.

%\bibliography{yili}
\providecommand{\bysame}{\leavevmode\hbox to3em{\hrulefill}\thinspace} %
\providecommand{\MR}{\relax\ifhmode\unskip\space\fi MR } 
%\MRhref is called by the amsart/book/proc definition of \MR.
\providecommand{\MRhref}[2]{  \href{http://www.ams.org/mathscinet-getitem?mr=#1}{#2}
} \providecommand{\href}[2]{#2}

\end{document}